     \newcommand{\supp}{\operatorname{supp}}
\newcommand{\Ker}{{\operatorname{Ker}}}
     \newcommand{\N}{{\mathbb{N}}}
     \newcommand{\R}{{\mathbb{R}}}
     \newcommand{\Z}{{\mathbb{Z}}}
     \newcommand{\C}{{\mathbb{C}}}
\newcommand{\e}{{\rm e}}
\renewcommand{\i}{{\rm i}}
\renewcommand{\d}{{\rm d}}
\newcommand{\per}{{\rm per}}
\newcommand{\sr}{{\rm sr}}
\renewcommand{\Re}{{\rm Re}\,}
\renewcommand{\Im}{{\rm Im}\,}
\newcommand\inp[2][]{#1 \langle #2#1\rangle}
\newcommand\parb[2][]{#1 \big ( #2#1\big )}
\newcommand\parbb[2][]{#1 \Big ( #2#1\Big )}
\newcommand{\phy}{{\rm phy}}
\newcommand{\pp}{{\rm pp}}
\newcommand{\mand}{\text{ and }}
\newcommand{\mfor}{\text{ for }}
\newcommand{\mforall}{\text{ for all }}
\newcommand{\w}[1]{\langle {#1} \rangle}
\newcommand{\bN}{{\mathbb N}}
\newcommand{\bH}{{\mathbb H}}
\newcommand{\bS}{{\mathbb S}}
\newcommand{\vB}{{\mathcal B}}
\newcommand{\vD}{{\mathcal D}}
\newcommand{\vH}{{\mathcal H}}
     \theoremstyle{plain}
     \newtheorem{thm}{Theorem}[section]
     \newtheorem{prop}[thm]{Proposition}
     \newtheorem{lemma}[thm]{Lemma}
      \newtheorem{cor}[thm]{Corollary}
     \theoremstyle{definition}
     \newtheorem{defn}[thm]{Definition}
     \newtheorem{cond}[thm]{Condition}
\newtheorem*{remarks*}{Remarks}
\newtheorem*{remark*}{Remark}
\numberwithin{equation}{section}
\begin{document}

\title[Threshold spectral analysis]{Two-body threshold spectral analysis, the critical case}

\author{Erik Skibsted}
\address[Erik Skibsted]{Institut for  Matematiske
Fag \\
Aarhus Universitet\\ Ny Munkegade  8000 Aarhus C,
Denmark}
\email{skibsted@imf.au.dk}

\author{Xue Ping Wang}
\address[Xue Ping  Wang]{Laboratoire de Mathématiques Jean Leray, UMR CNRS 6629\\
Université de Nantes\\
44322 Nantes Cedex, France}
\email{xue-ping.wang@univ-nantes.fr}

\thanks{X.P. W. is  supported in part by the French National Research Agency  under the project No. ANR-08-BLAN-0228-01}
\subjclass{35P25, 47A40, 81U10} 
\keywords{Threshold spectral analysis, Schr\"odinger operator, critical potential, phase shift}

\begin{abstract}
We study in dimension $d\geq2$ low-energy spectral and scattering
asymptotics for two-body $d$-dimensional Schr\"odinger
operators with a radially symmetric potential falling off like
$-\gamma r^{-2},\;\gamma>0$. We consider angular momentum sectors, labelled by
$l=0,1,\dots$,  for
which $\gamma>(l+d/2
-1)^2$. In each such sector the reduced Schr\"odinger
operator has infinitely many negative eigenvalues accumulating
at zero. We show that the resolvent has a non-trivial
oscillatory behaviour as the spectral parameter approaches zero  in
cones bounded away from the negative half-axis, and we
derive an  asymptotic formula for the phase shift.
\end{abstract}

\maketitle
\tableofcontents

\section{Introduction}

The low-energy spectral and scattering asymptotics for two-body Schr\"odinger
operators depends heavily on the decay of the potential at
infinity. The most well-studied class is given by   potentials 
decaying  faster  than $r^{-2}$ (see for example \cite{JN} and references there). The
expansion  of the resolvent is in this case in  terms of powers of  dimension-dependent modifications of the spectral parameter
and it depends on possible existence of zero-energy bound states and/or zero-energy
resonance states. Classes of negative potentials  decaying slower than
$r^{-2}$ were  studied in
\cite{FS, Ya}. In that case the resolvent is more regular
at zero energy. It has an expansion in integer powers of the spectral parameter and there are no zero-energy bound
states nor resonance states. Moreover, the nature of the expansion is ``semi-classical''. 
For general perturbations of critical decay of the order $r^{-2}$  and
with 
an assumption related to  the Hardy inequality, the
threshold spectral analysis is  carried out in \cite{Wan1,Wan2}. It is
shown that for this class of potentials  the zero resonance may appear in any space dimension with arbitrary multiplicity. Recall that for potentials decaying faster than $r^{-2}$,  the zero resonance is absent if the space dimension $d$ is bigger than or equal to five and its multiplicity is at most one when $d$ is equal to three or four.   The goal of this paper is to treat 
a class of radially symmetric potentials decaying
like $-\gamma r^{-2}$ at infinity, where $\gamma >0$ is big such that the condition used in \cite{Wan1,Wan2} is not satisfied. In this case,  there exist infinitely
many negative eigenvalues (see \eqref{eq:85p} for a precise
condition). We will give a resolvent expansion as well as
an asymptotic formula for the phase shift. These
expansions are to our knowledge not  semi-classical even though there
are common features with the more slowly decaying case. 

Consider for  $d\geq 2$ the $d$-dimensional
Schr\"odinger operator
\begin{equation*}
  Hv=(-\triangle +W)v=0,
\end{equation*}
 for a radial
potential $W=W(|x|)$ obeying 

\begin{cond}\label{cond:asympt-full-hamilt2bp}
 \begin{enumerate}[1)]
\item \label{item:4p}$W(r)=W_1(r)+W_2(r);\;W_1(r)=-\tfrac{\gamma}{r^2}\chi(r>1)\mfor\text{
  some }
   \gamma>0$,
  \item \label{item:1bbp}
  $W_2\in C(]0,\infty[,\R)$,
\item \label{item:2bbp} $\exists\, \epsilon_1,C_1>0:\;|W_2(r)|\leq
  C_1r^{-2-\epsilon_1}\mfor r>1$,
\item \label{item:3bbp} $\exists\, \epsilon_2,C_2>0:
  |W_2(r)|\leq C_2r^{\epsilon_2-2}\mfor r\leq  1$.
  \end{enumerate} 
\end{cond}

 Here the function $\chi(r >1)$ is a 
smooth cutoff function taken to be $1$ for $r\geq 2$ and $0$ for $r\leq 1$
(see the end of this introduction for the precise 
definition). 
 Under Condition \ref{cond:asympt-full-hamilt2bp} $H$ is self-adjoint
 as defined in terms of the Dirichlet form on $H^1(\R^d)$. Let  $H_l$,
 $l=0,1,\dots$, be the  corresponding 
reduced  Hamiltonian on $L^2(\R_+)$ corresponding to the  eigenvalue
$l(l+d-2)$ of
the Laplace-Beltrami operator on $S^{d-1}$ 
\begin{equation}
  \label{eq:80p}
  H_lu=-u''+(V_\infty+V)u.
\end{equation} Here
\begin{subequations}
  \begin{align}
  \label{eq:83p}
 V_\infty(r)&= \tfrac{\nu^2-1/4}{r^2}\chi(r>1);\; \nu^2=(l+\tfrac d2
-1)^2-\gamma,\\
V(r)&=W_2(r)+ \tfrac{(l+\tfrac d2
-1)^2-1/4}{r^2}\big (1-\chi(r>1)\big ).\label{eq:84p} 
\end{align}
\end{subequations}
 Notice that $V$ is small at infinity compared to
$V_\infty$. We are interested in spectral and scattering properties of
$H_l$ at zero energy in the case
\begin{equation}
  \label{eq:85p}
  \gamma>(l+\tfrac d2
-1)^2.
\end{equation}  
This condition is equivalent to having $\nu$ in
\eqref{eq:83p} purely imaginary (for convenience we fix it in this
case as $\nu=-\i \sigma,
\, \sigma>0$), and it implies the existence of a
sequence of negative eigenvalues of $H_l$ accumulating at zero
energy. 

Our first main result is on the expansion of the resolvent
\begin{equation*}
    R_l(k):=(H_l-k^2)^{-1}\mfor\, k\in \Gamma_{\theta}^{\pm},
  \end{equation*} where here (for any $\theta\in ]0,\pi/2[$)
\begin{subequations}
 \begin{align*}
  \Gamma_{\theta}^+&= \{ k\neq 0|\, 0 < \arg k \leq \theta\},\\
\Gamma_{\theta}^{-}&=\{ k\neq 0|\, \pi-\theta
  \leq \arg k < \pi\}.
\end{align*} 
  \end{subequations} We say that a solution $u$ to the equation
  \begin{equation}\label{eq:50p}
   -u''(r)+ \parb{V_\infty(r)+V(r)}u(r)=0
  \end{equation} is   {\it regular} if the
  function $r\to \chi(r<1)u(r)$ belongs to $\vD (H_l)$. For any $t\in
  \R$ we introduce the weighted $L^2$-space  $\vH_t:=\inp{r}^{-t}L^2(\R_+);\,\inp{r}=(1+r^2)^{1/2}$. 
  
\begin{thm} \label{thm1} Suppose Condition \ref{cond:asympt-full-hamilt2bp} and
  \eqref{eq:85p} for some (fixed) $l\in\N\cup\{0\}$. Let $\theta\in ]0,\pi/2[$. 
    \label{thm:constr-resolvp} There exist (finite) rational
    functions $f^{\pm}$ in the variable  $k^{2\nu}$ for $k\in
    \Gamma_{\theta}^{\pm}$ for which
    \begin{equation}\label{eq:45p}
      \lim_{\Gamma_{\theta}^{\pm}\ni k\to
        0}\Im f^{\pm}(k^{2\nu})\text{ do not exist,}
    \end{equation}  
there exist  Green's functions for $H_l$ at zero
energy, denoted $R_0^{\pm}$,  and there  exists a real nonzero regular
solution to (\ref{eq:50p}), denoted $u$,  such that the following
    asymptotics hold. For all $s>s'>1,\,s\leq 1+\epsilon_1/2,\,s'\leq 3$:
 \begin{equation}
  \limsup _{\Gamma_{\theta}^{\pm}\ni k\to
        0}\;|k|^{1-s'}\big \|R_l(k)-R_0^{\pm}-f^{\pm}(k^{2\nu})
  |u\rangle \langle u|\big\|_{\vB(\vH_s,\vH_{-s})} <\infty.\label{eq:24bbbbp}
\end{equation} 
\end {thm}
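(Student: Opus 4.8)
The plan is to realize $R_l(k)$ as an explicit integral kernel built from two solutions of the ODE, and then to extract its behaviour as $k\to0$; the new feature of the critical case $\nu=-\i\sigma$ is that \emph{both} solutions of \eqref{eq:50p} oscillate at infinity like $r^{1/2\pm\i\sigma}$ instead of one of them decaying, so one has to track connection coefficients carefully. Concretely, for $k\in\Gamma_\theta^{\pm}$ with $|k|$ small I would construct (i) the regular solution $u_k$ of $-u''+(V_\infty+V-k^2)u=0$, normalised by its leading behaviour $r^{\,l+d/2-1/2}$ as $r\to0$ — a Volterra iteration at the origin, controlled by Condition~\ref{cond:asympt-full-hamilt2bp} — and (ii) the outgoing solution $\phi_+(\cdot,k)$, i.e.\ the one modelled near infinity on $\sqrt r\,H^{(1)}_\nu(kr)$, by a Volterra iteration from $+\infty$; since $\Im k>0$ on $\Gamma_\theta^{\pm}$, $\phi_+$ is the unique solution in $L^2$ near $+\infty$. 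I would then establish the uniform asymptotics $u_k(r)=u_0(r)+O\bigl(|k|^2\langle r\rangle^{5/2}\bigr)$ for $1\le r\le|k|^{-1}$, where $u_0$ solves \eqref{eq:50p} with $u_0=A\psi_++B\psi_-$ near infinity, $\psi_\pm$ being the zero-energy solutions obeying $\psi_\pm(r)=r^{1/2\pm\nu}\bigl(1+O(r^{-\epsilon_1})\bigr)$, and with $A\ne0$, $B=\bar A$ forced by reality of $u_0$ (so that $u_0$ genuinely oscillates at infinity — the ``non-resonant'' hallmark of this regime); together with the companion expansion $k^{\nu}\phi_+(r,k)=c_-\psi_-(r)+c_+k^{2\nu}\psi_+(r)+(\text{error})$, with $c_\pm\ne0$ the usual Bessel connection constants. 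Evaluating Wronskians in an intermediate region and using $\Wr(\psi_+,\psi_-)=-2\nu$ then gives $W_k:=\Wr\bigl(u_k,\phi_+\bigr)=2\nu\,k^{-\nu}\bigl(Bc_+\,k^{2\nu}-Ac_-\bigr)+(\text{error})$.

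Write $\zeta=k^{2\nu}$ and, substituting the leading terms above into the standard formula $R_l(k;r,r')=u_k(r_<)\,\phi_+(r_>,k)/W_k$, define the limiting kernel $K(r,r';\zeta)=u_0(r_<)\bigl(c_-\psi_-+c_+\zeta\,\psi_+\bigr)(r_>)\big/\bigl(2\nu(Bc_+\zeta-Ac_-)\bigr)$. Using $A\psi_++B\psi_-=u_0$, a short computation gives $\partial_\zeta K=\dfrac{-c_+c_-}{2\nu\,(Bc_+\zeta-Ac_-)^2}\,u_0(r)u_0(r')$, whence $K(r,r';\zeta)=R_0^{\pm}(r,r')+f^{\pm}(\zeta)\,u_0(r)u_0(r')$ with $R_0^{\pm}$ independent of $\zeta$ — it is a Green's function for $H_l$ at zero energy, since $H_lu_0=0$ — and $f^{\pm}(\zeta)=\dfrac{c_-}{2\nu B\,(Bc_+\zeta-Ac_-)}$ up to an additive constant absorbed into $R_0^{\pm}$, a finite rational (indeed M\"obius) function of $\zeta=k^{2\nu}$. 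The superscript $\pm$ records the branch of $(kr)^{\pm\nu}$ and of $k^{2\nu}$ adapted to the cone $\Gamma_\theta^{\pm}$, so that $c_\pm$, $R_0^{\pm}$, $f^{\pm}$ and the pole $\zeta_0^{\pm}=Ac_-/(Bc_+)$ of $f^{\pm}$ all carry a $\pm$; we take $u:=u_0$ in the statement.

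To prove \eqref{eq:45p} I would first show $|W_k|\ge c>0$ uniformly on $\Gamma_\theta^{\pm}$ as $k\to0$, equivalently $|\zeta_0^{\pm}|\ne1$. This follows because $W_k\ne0$ for real $k\ne0$ — a standard consequence of the limiting absorption principle for $H_l$ on $(0,\infty)$ — while $\{k^{2\nu}:k\in\R\setminus\{0\}\}$ exhausts the unit circle, so $Bc_+\zeta-Ac_-$ does not vanish there; combined with $W_k\ne0$ throughout $\Gamma_\theta^{\pm}$ (off-spectrum), this places $\zeta_0^{\pm}$ at positive distance from the annulus $\{k^{2\nu}:k\in\Gamma_\theta^{\pm}\}$. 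It is here that keeping $\arg k$, hence $\arg k^2$, bounded away from the negative real axis — where the eigenvalues of $H_l$ accumulate — is essential. Since $c_-\ne0$, $f^{\pm}$ is a genuinely non-constant M\"obius function with pole off the unit circle, so $f^{\pm}(\{|\zeta|=\rho\})$ is an honest circle for $\rho$ near $1$; as $\Gamma_\theta^{\pm}\ni k\to0$ the point $\zeta=k^{2\nu}$ stays in that annulus while $\arg\zeta=-2\sigma\ln|k|\to+\infty$, so $\zeta$ winds around infinitely often and $\Im f^{\pm}(k^{2\nu})$ keeps oscillating, which is \eqref{eq:45p}. (This is also immediate from the asymptotic phase-shift formula, the paper's second main result, which exhibits the phase shift as a non-constant periodic function of $\ln|k|$ up to $o(1)$.)

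Finally, for \eqref{eq:24bbbbp} the kernel of $R_l(k)-R_0^{\pm}-f^{\pm}(k^{2\nu})|u\rangle\langle u|$ equals $u_k(r_<)\phi_+(r_>,k)/W_k-K(r,r';k)$, and I would estimate it by splitting according to whether $r_>\le|k|^{-1}$ or $r_>>|k|^{-1}$. On $\{r_>\le|k|^{-1}\}$ one inserts $u_k=u_0+\delta u$, $k^{\nu}\phi_+=c_-\psi_-+c_+\zeta\psi_++\delta\Phi$, $k^{\nu}W_k=2\nu(Bc_+\zeta-Ac_-)+\delta W$ and expands the quotient; using $|\delta u(r)|\le C|k|^2\langle r\rangle^{5/2}$, the analogous bounds on $\delta\Phi,\delta W$ (whose leading errors are $O(|k|^{\epsilon_1})$ coming from $W_2$ and $O(|k|^2)$ from the energy), the lower bound on $|W_k|$, and weighted Hilbert--Schmidt estimates, this contribution is $O(|k|^{s'-1})$ precisely under the constraints $s\le1+\epsilon_1/2$ and $s'\le3$ of the theorem. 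On $\{r_>>|k|^{-1}\}$ one uses $|\phi_+(r_>,k)|\le C|k|^{-1/2}e^{-r_>\Im k}$, together with $|u_k(r_<)|\le C\langle r_<\rangle^{1/2}$ and $|W_k|\ge c$; the resulting weighted Hilbert--Schmidt norm is $O(|k|^{s-1})$, which is $o(|k|^{s'-1})$ since $s>s'$, and the convergence of $\int\langle r\rangle^{1-2s}\,dr$ at infinity is exactly what forces $s>1$; the same crude bound disposes of the $\{r_>>|k|^{-1}\}$ tails of $R_0^{\pm}$ and of $|u\rangle\langle u|$. Summing the two regions gives \eqref{eq:24bbbbp}. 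I expect the main obstacle to be the first paragraph above: establishing, uniformly as $k\to0$, the precise asymptotics of the outgoing solution $\phi_+(\cdot,k)$ and of the Wronskian $W_k$, in particular the lower bound $|W_k|\ge c$ on the cone — in the non-critical case the decay of $\phi_+$ makes this routine, whereas here both solutions at infinity merely oscillate.
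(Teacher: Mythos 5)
Your route is genuinely different from the paper's. The paper never writes the resolvent as $u_k(r_<)\phi_+(r_>,k)/\Wr(u_k,\phi_+)$; instead it first computes the exact Dirichlet resolvent of the model operator on $[1,\infty[$ via Hankel functions, obtaining the decomposition $R^D_k=R^D_0+\zeta(k)|\phi_0\rangle\langle\phi_0|+E_k$ with $\zeta$ a M\"obius function of $k^{2\nu}$, and then glues this to the interacting Hamiltonian by a parametrix $G_k=\chi_1(H\mp\i)^{-1}\chi_1+\chi_2R^D_k\chi_2$, inverting $I+K^{\pm}+\zeta(k)|\psi_0\rangle\langle\chi_2\phi_0|$ by Fredholm theory plus an explicit rank-one inversion; the identification of the rank-one term with $|u\rangle\langle u|$ comes from $R(k)^*=R(\bar k)$. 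Your Jost-function approach buys a more transparent formula for $f^{\pm}$ (your $\partial_\zeta K$ computation cleanly isolating the rank-one $\zeta$-dependence is correct and rather elegant, and your error analysis by splitting at $r_>\lessgtr|k|^{-1}$ matches the paper's estimates \eqref{eq:19iii}--\eqref{eq:21}); the paper's approach generalizes more readily beyond exactly solvable models and handles the local singularity at $r=0$ with less bookkeeping.

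There is, however, one genuine gap: your justification of the uniform lower bound $|W_k|\ge c>0$, equivalently that the pole $\zeta_0^{\pm}=Ac_-/(Bc_+)$ of $f^{\pm}$ lies at positive distance from the closed annulus $\{k^{2\nu}:k\in\Gamma^{\pm}_\theta\}$. Pointwise non-vanishing of $W_k$ for real $k\neq0$ and for $k$ in the open cone does \emph{not} imply this: if $|\zeta_0^{+}|$ equalled, say, $1$, then along a sequence of real $k_n\downarrow0$ with $k_n^{2\nu}=\zeta_0^{+}$ one would have $k_n^{\nu}W_{k_n}=o(1)$ without any single Wronskian vanishing; and a limiting absorption principle that is \emph{uniform} down to zero energy is precisely part of what the theorem asserts, so invoking it here is circular. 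This is exactly the step the paper devotes Lemma 3.4 to, arguing by compactness along a hypothetical sequence $\eta^{+}(k_n)\to0$ and a virial (integration-by-parts) identity, culminating in the inequality \eqref{eq:45bb}, $\e^{2\sigma(\pi-2\arg k)}>1$, which is where $\theta<\pi/2$ enters quantitatively. Fortunately your framework closes the gap by direct computation: from the Hankel connection formula \eqref{eq:3} with $\nu=-\i\sigma$ one gets $|c_+/c_-|=\e^{-\sigma\pi}$, and $B=\bar A$ gives $|A/B|=1$, hence $|\zeta_0^{\pm}|=\e^{\sigma\pi}$ exactly; since $|k^{2\nu}|=\e^{2\sigma\arg k}\in(1,\e^{2\sigma\theta}]$ on $\Gamma_\theta^{+}$ and $\in[\e^{2\sigma(\pi-\theta)},\e^{2\sigma\pi})$ on $\Gamma_\theta^{-}$, the pole is separated from both annuli precisely because $\theta<\pi/2$ --- the same inequality as \eqref{eq:45bb}. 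You should replace your limiting-absorption argument by this computation (it also requires you to actually prove $A\neq0$, which follows from Lemma \ref{lemma_bound} applied to $u_0$). With that repair, and modulo the separate treatment of $(d,l)=(2,0)$ (Appendix \ref{Case (d,l)eq (2,0)}), your proof is sound.
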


Due to \eqref{eq:45p} the rank-one operators $f^{\pm}(k^{2\nu})
  |u\rangle \langle u|$ in \eqref{eq:24bbbbp} are non-trivially oscillatory. This
  phenomenon does not occur  for low-energy resolvent expansions for
  potentials either decaying faster or slower than $r^{-2}$
  (cf. \cite{JN} and \cite{FS, Ya}, respectively), nor  for sectors where   \eqref{eq:85p} is
  not fulfilled  (cf. \cite{Wan2}).   Combining Theorem \ref{thm1} and the results of \cite{Wan2}, we can  deduce the resolvent asymptotics near threshold for $d$-dimensional Schr\"odinger operators with
  critically decaying,  spherically symmetric potentials, see Theorem \ref{thm3.7}. An advantage to work with spherically symmetric potentials is that we can diagonalize the operator in spherical harmonics and explicitly calculate some subtle quantities. For example, one can easily show that if zero is a resonance of $H$, then its multiplicity is  equal to
\[
\frac{ (m+d-3)!}{(d-2)! (m-1)!}  + \frac{ (m+d-2)!}{(d-2)! m!}
\]
where $m \in\bN\cup\{0\}$ is such that  $ (m+ \frac d 2 -1)^2 -\gamma \in ]0, 1]$.  This shows that multiplicity of zero resonance grows like $\gamma^{\frac{d-2}{2}}$ when $\gamma$ is big and
$d\ge 3$. To study the resolvent asymptotics for non-spherically
symmetric potential $W(x)$  behaving like $\frac{q(\theta)}{r^2}$  at
infinity ($ x= r\theta$ with $r =|x|$), one is led  to analyze the
interactions  between different oscillations and resonant states. This
is not carried out in the present  work. 

Our second main result is on the asymptotics of the phase shift. Let  $u_l$ be a regular
solution to the reduced Schr\"odinger equation 
\begin{equation*}
 -u''+(V_\infty+V)u=\lambda u;\;\lambda>0. 
\end{equation*} Write 
\begin{equation*}
  \lim_{r\to\infty}
\left(u_l(r)-C 
\sin \big(\sqrt \lambda r+D_l\big
)\right)
=0. 
\end{equation*} 
The standard definition of the
phase shift (coinciding with the time-depending definition) is
 \begin{equation*}
   \sigma_l^\phy(\lambda)=D_l+\tfrac{d-3+2l}{4}\pi.
 \end{equation*}

The notation 
    $\sigma^{\per}=\sigma^{\per}(t)$ signifies  below the continuous real-valued $2\pi$-periodic function
    determined by
\begin{equation*}
\begin{cases}
\sigma^{\per}(0)&=0\\
\e^{\pi\sigma}\e^{-\i t}-\e^{\i t}&=r(t)\e^{\i (\sigma^{\per}(t)-t)};\; \;r(t)>0, t\in \R.
\end{cases}\;.
\end{equation*}

\begin{thm}
    \label{thm:phasebbp} Suppose Condition
    \ref{cond:asympt-full-hamilt2bp} and \eqref{eq:85p} for some
    $l\in\N\cup\{0\}$. Let
    \begin{equation*}
      \sigma=\sqrt{\gamma-(l+\tfrac d2 -1)^2}
    \end{equation*} 
(recall $\nu=-\i\sigma$).
    There exist $C_1,C_2\in \R$ such that
    \begin{equation}
      \label{eq:64vvp}
     \sigma^{\phy}_l(\lambda)+\sigma\ln \sqrt{\lambda}- \sigma^{\per}( \sigma\ln \sqrt{\lambda}+C_1)\to 
C_2\mfor \lambda\downarrow 0.    
\end{equation} 
\end{thm}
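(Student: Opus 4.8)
I would compute the asymptotic phase of the regular solution $u_l(\cdot,\lambda)$ at $r=+\infty$ by matched asymptotics across three regions: near $r=0$ the self-adjoint boundary condition is carried by the $\lambda$-independent zero-energy regular solution $u$ of Theorem~\ref{thm1}; in an intermediate region it is carried by the exact solutions $r^{1/2\pm\nu}$ of the model equation $-w''+\tfrac{\nu^2-1/4}{r^2}w=0$; and near $r=+\infty$ the oscillation is carried by $\psi_\pm(r,\lambda):=\sqrt r\,J_{\pm\nu}(\sqrt\lambda\,r)$, which solve the $V\equiv0$, energy-$\lambda$ equation, have constant Wronskian $\Wr\{\psi_+,\psi_-\}=-\tfrac{2\sin\nu\pi}{\pi}\neq0$, and satisfy $\psi_\pm(r,\lambda)\sim\sqrt{2/(\pi\sqrt\lambda)}\,\cos(\sqrt\lambda\,r\mp\nu\pi/2-\pi/4)$ as $r\to\infty$. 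I begin by recording the large-$r$ behaviour of $u$: for $r\geq2$, \eqref{eq:50p} reads $-u''+\tfrac{\nu^2-1/4}{r^2}u+W_2u=0$, and since $\int^\infty r|W_2(r)|\,\d r<\infty$ by Condition~\ref{cond:asympt-full-hamilt2bp}, a Volterra estimate against $r^{1/2\pm\nu}$ gives $u(r)=c\,r^{1/2+\nu}+\bar c\,r^{1/2-\nu}+o(r^{1/2})$ for some $c\neq0$ (here $c_-=\overline{c_+}=:\bar c$ because $u$ is real and $\bar\nu=-\nu$, and $c:=c_+\neq0$ since both model solutions have modulus $r^{1/2}$ while $u\not\equiv0$). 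Write $c\,\Gamma(1+\nu)\,2^{\nu}=\rho\,\e^{\i\phi}$ with $\rho>0$, $\phi\in\R$; these two real numbers will produce $C_1,C_2$.

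Next I connect the three regions. Normalise $u_l(\cdot,\lambda)$ so that it agrees with $u$ to leading order as $r\to0$; this is legitimate because the indicial exponents at the origin, $\tfrac12\pm(l+\tfrac d2-1)$, are real and $\lambda$-independent, so up to scalar the regular solution depends continuously on $\lambda$ down to $\lambda=0$. Comparing $u_l(\cdot,\lambda)$ with $u$ by variation of constants on $(0,r]$ with source $\lambda u_l$ gives $|u_l(r,\lambda)-u(r)|=O(\lambda r^{5/2})$, hence, for any fixed small $\delta>0$, $u_l(r,\lambda)=c\,r^{1/2+\nu}+\bar c\,r^{1/2-\nu}+o(r^{1/2})$ uniformly on $[A,\lambda^{-1/2+\delta}]$ for $A$ large. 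On the other hand, since $W_2$ is short range, variation of constants against $\psi_\pm$ yields $u_l(r,\lambda)=p(\lambda)\psi_+(r,\lambda)+q(\lambda)\psi_-(r,\lambda)+\varepsilon(r,\lambda)$ valid for $r\geq2$, with $\varepsilon(r,\lambda)\to0$ as $r\to\infty$ at fixed $\lambda>0$, and with $\bar p=q$ (because $u_l$ is real and $\overline{\psi_+}=\psi_-$). Inserting the small-argument expansion $\psi_\pm(r,\lambda)=\tfrac{(\sqrt\lambda/2)^{\pm\nu}}{\Gamma(1\pm\nu)}\,r^{1/2\pm\nu}\big(1+O(\lambda r^2)\big)$ and matching on the overlap interval $[A,\lambda^{-1/2+\delta}]$, where $\sqrt\lambda\,r\to0$, with the previous expansion of $u_l$, identifies $p(\lambda)=c\,\Gamma(1+\nu)\,(\sqrt\lambda/2)^{-\nu}\,(1+o(1))$ and $q(\lambda)=\overline{p(\lambda)}$.

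Finally, inserting the large-$r$ asymptotics of $\psi_\pm$ into $u_l=p\psi_++q\psi_-+\varepsilon$ and using $\bar q=p$ gives $u_l(r,\lambda)\sim 2\sqrt{2/(\pi\sqrt\lambda)}\,|M(\lambda)|\cos\!\big(\sqrt\lambda\,r-\tfrac\pi4+\arg M(\lambda)\big)$, where $M(\lambda):=\tfrac12\big(p(\lambda)\e^{-\i\nu\pi/2}+q(\lambda)\e^{\i\nu\pi/2}\big)$; comparing with $C\sin(\sqrt\lambda\,r+D_l)$ yields $D_l=\tfrac\pi4+\arg M(\lambda)$ and therefore $\sigma^{\phy}_l(\lambda)=\tfrac{(d-2+2l)\pi}{4}+\arg M(\lambda)$ for the continuous branch. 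Now substitute $\nu=-\i\sigma$ and the values of $p,q$, and put $t:=\sigma\ln\sqrt\lambda+\phi$; a short computation gives
\[
2M(\lambda)=\rho\,\e^{-\sigma\pi/2}\,\e^{-\i t}\big(\e^{\sigma\pi}+\e^{2\i t}\big)+o(1),
\]
and since $\big|\e^{\sigma\pi}+\e^{2\i t}\big|\geq\e^{\sigma\pi}-1>0$, the $o(1)$ affects the argument only by $o(1)$. Using $\e^{\sigma\pi}+\e^{2\i t}=\e^{\sigma\pi}-\e^{2\i(t+\pi/2)}$ and the definition of $\sigma^{\per}$ we get, up to a fixed integer multiple of $2\pi$,
\[
\arg M(\lambda)=-t+\sigma^{\per}\!\big(t+\tfrac\pi2\big)+o(1)=-\sigma\ln\sqrt\lambda-\phi+\sigma^{\per}\!\big(\sigma\ln\sqrt\lambda+\phi+\tfrac\pi2\big)+o(1),
\]
which is precisely \eqref{eq:64vvp} with $C_1=\phi+\tfrac\pi2$ and $C_2=\tfrac{(d-2+2l)\pi}{4}-\phi$ (the $2\pi$-ambiguity and the choice of continuous branch of $\sigma^{\phy}_l$ being absorbed into $C_2$).

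The main obstacle is the middle step: making the matched-asymptotics connection uniform as $\lambda\downarrow0$, i.e. establishing that on a single overlap interval $u_l(\cdot,\lambda)$ is simultaneously close, with controlled \emph{relative} error, both to the continuation of $u$ (hence to $c\,r^{1/2+\nu}+\bar c\,r^{1/2-\nu}$) and to $p(\lambda)\psi_++q(\lambda)\psi_-$, so that $p(\lambda),q(\lambda)$ are pinned down up to a multiplicative $1+o(1)$. This requires careful Volterra/Gronwall bounds handling both the short-range tail $W_2$ at infinity and the $\lambda u_l$ source term; the critical singularity of $V$ at $r=0$ causes no trouble because, by the notion of regular solution in Theorem~\ref{thm1}, it is entirely absorbed into the $\lambda$-independent choice of $u$, so only continuity in $\lambda$ near the origin is needed.
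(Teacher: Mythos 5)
Your proposal is correct in its main lines but follows a genuinely different route from the paper. You run a three-region matched-asymptotics argument: the inner data is the zero-energy regular solution $u$, the outer basis is $\psi_\pm=\sqrt r\,J_{\pm\nu}(\sqrt\lambda r)$, and the coefficients $p,q$ are pinned down by matching on an expanding overlap window $[A,\lambda^{-1/2+\delta}]$ where both the small-argument Bessel expansion and the perturbative comparison $u_{\lambda}-u=O(\lambda r^{5/2})$ are simultaneously valid; your final computation of $\arg M(\lambda)$ and the identification with $\sigma^{\per}$ via $\e^{\sigma\pi}+\e^{2\i t}=\e^{\sigma\pi}-\e^{2\i(t+\pi/2)}$ is correct and reproduces \eqref{eq:64vvp} with $C_1=\phi+\pi/2$. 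The paper instead works with the Hankel basis $r^{1/2}H^{(1)}_\nu(kr)$, $r^{1/2}\overline{H^{(1)}_{-\nu}(kr)}$ and avoids any matching region: it first treats compactly supported $V$, where the connection between the regular solution and the outer basis is made \emph{exactly} at a single finite radius $R$ by the Wronskian formula \eqref{eq:65pp} together with $u_{k^2}(R)-u(R)=O(k^2)$ (this produces the constant $D$ of \eqref{eq:79}, which is essentially your $c$ expressed through $W(r^{1/2-\nu},u)(R)$), and then in a separate step absorbs the $O(r^{-2-\epsilon_1})$ tail by a Volterra estimate uniform in $k$, choosing radii $R_n$ along which $|D(R_n)|$ stays bounded below. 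Your approach unifies these two steps and is conceptually cleaner, but it buys this at the price of the one place where real work remains: you must make the overlap matching uniform as $\lambda\downarrow 0$, i.e.\ show that the outer Volterra errors $a^\pm(r)-a^\pm(\infty)=O(r^{-\epsilon_1})$ (uniformly in $k$) and the inner errors $O(A^{-\epsilon_1})+O(k^{4\delta})$ are both $o(1)$ relative to $r^{1/2}$ on a common window, and that the coefficients of $r^{\pm\nu}=\e^{\pm\i\sigma\ln r}$ can be stably extracted there; this is exactly the estimate the paper's finite-$R$ connection is designed to sidestep. You correctly flag this as the main obstacle, and you also correctly observe that nonvanishing of $c$ follows from Lemma \ref{lemma_bound}; with those Volterra/Gronwall bounds written out, your argument is a complete and valid alternative proof.
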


Whence the leading term in the asymptotics of the phase
shift is linear in $\ln \sqrt{\lambda}$ while the next term is
oscillatory in the same quantity. The (positive) sign agrees with the
well-known Levinson
theorem (cf. \cite[(12.95) and (12.156)]{Ne}) valid for potentials 
decaying faster than $r^{-2}$. Also the qualitative
behaviour of these terms as $\sigma\to 0$  (i.e. finiteness in the
limit) is agreeable to the case
where \eqref{eq:85p} is not fulfilled  (studied in  \cite{Ca} from a different point of
view). 

The bulk of this paper concerns  somewhat more general one-dimensional problems
than discussed above. In particular  we consider for  $(d,l)\neq
(2,0)$ a model with a local singularity at $r=0$ that is more general than
specified by Condition \ref{cond:asympt-full-hamilt2bp}
\ref{item:3bbp} and \eqref{eq:84p}. This extension does not contribute
by any complication and is therefore naturally included. It would be  possible to extend
our methods to certain types of more general local singularities,
however this would add some extra complication that we will not 
pursue. Our methods rely heavily on explicit properties of solutions to the
Bessel equation as well as    ODE techniques. These properties
 compensate for the fact that, at least to our knowledge, semi-classical
analysis is not doable in the present context (for
instance the semi-classical formula \eqref{eq:relat} for the
asymptotics of the phase shift for slowly
decaying potentials is not correct under Condition
\ref{cond:asympt-full-hamilt2bp}.) See however \cite{CSST} in the case the potential is positive.

One of our motivations for studying a potential with  critical fall off comes from
an $N$-body problem: Consider a $2$-cluster $N$-body threshold under the
assumption of Coulomb pair interactions, this could be given by two
atoms each one being confined in a bound state. Suppose one atom is
charged while the other one is neutral. The effective intercluster
potential will in this case in a typical situation (given by nonzero 
moment of charge of the 
bound state of the neutral atom) have $r^{-2}$ decay
although with some angular dependence (the so-called dipole approximation). Whence  we expect (due to the present
work) that the  $N$-body resolvent will have some oscillatory
behaviour  near the threshold in question. Proving this (and related spectral
 and scattering properties) would,  in addition to material from the present paper, rely on a
reduction scheme not to be discussed here. We plan to study this
problem in a separate  future  publication.

In this  paper we consider parameters  $\pm\nu, z\in \C$ satisfying
 $\nu=-\i \sigma$ where $\sigma >0$  and $z\in \C\setminus
\{0\}$ with  $\Im z\geq 0$. Powers of $z$ are throughout the paper defined in terms
of  the argument function fixed by the condition $\arg z \in
[0,\pi]$.  We shall use the standard notation
$\inp{z}:=(1+|z|^2)^{1/2}$. For any given $c>0$ we shall use the notation $\chi(r>c)$ to denote  a  given
real-valued  function $\chi\in C^\infty(\R_+)$ with $\chi(r)=0$ for $r\leq c$ and
$\chi(r)=1$ for $r\geq 2c$. We take it such that there exists a  real-valued  
function $\chi_<\in C^\infty(\R_+)$, denoted $\chi_<=\chi(\cdot<c)$,
such that $\chi^2+\chi_<^2=1$. Let for $\theta \in [0,\pi/2[$ and $\epsilon>0$ 
\begin{align}
  \label{eq:11}
  \Gamma_{\theta,\epsilon}&= \{ k\neq 0|\, 0 \leq \arg k \leq \theta\text{ or }\pi-\theta
  \leq \arg k \leq \pi\}\cap\{|k|\leq \epsilon\},\\
\Gamma_{\theta,\epsilon}^{\pm}&=\Gamma_{\theta,\epsilon}\cap \{\pm \Re k>0\}.\label{eq:30}
 \end{align}

\bigskip

\section{Model asymptotics} \label{Model asymptotics}
 
In this Section, we give the resolvent asymptotics at zero for a model operator under the condition (\ref{eq:85p}). See \cite{Wan1}) when (\ref{eq:85p}) is not satisfied.  Recall firstly some
basic formulas for Bessel and Hankel functions from \cite[pp. 228--230]{Ta1} and \cite[pp. 126--127,
204]{Ta2} (or see  \cite{Wat}):
\begin{subequations}
  \label{eq:57sub}
\begin{align}
  \label{eq:1}
  J_\nu(z)&=\tfrac{(z/2)^{\nu}}{\Gamma(1/2)\Gamma(\nu +1/2)}\int_{-1}^{1}(1-t^2)^{\nu-1/2}\e^{\i zt}\,\d t,\\
\label{eq:2}\int_{-1}^{1}(1-t^2)^{\nu-1/2}\,\d
t&=\tfrac{\Gamma(1/2)\Gamma(\nu +1/2)}{\Gamma(\nu+1)},\\
\label{eq:3}H^{(1)}_\nu(z)&=\frac{ J_{-\nu}(z)-\e^{-\i \nu \pi}
  J_{\nu}(z)}{\i \sin (\nu\pi)},\\
\label{eq:4}H^{(1)}_\nu(z)&=\parb{\tfrac {2}{\pi
    z}}^{1/2}\tfrac{\e^{\i (z-\nu \pi/2-\pi/4)}}{\Gamma(\nu +1/2)}
\int_{0}^{\infty}\e^{-t}t^{\nu-1/2}(1-\tfrac{t}{2\i z})^{\nu-1/2}\,\d t.
\end{align}
\end{subequations}
The functions $J_{\nu}$ and  $H^{(1)}_\nu$ solve the Bessel equation
\begin{equation}
  \label{eq:5a}
  z^{-1/2}\parb{-\tfrac{\d^2}{\d z^2}+\tfrac{\nu^2-1/4}{z^2}-1}z^{1/2}u(z)=0.
\end{equation}

We have
\begin{subequations}
  \label{eq:58sub}
\begin{align}\label{eq:5} 
J_\nu(z)&=\e ^{\i \nu \pi}\overline
{ J_{\bar \nu}(-\bar z)},\\
H^{(1)}_\nu(z)&=  \e ^{- \i \nu \pi}H^{(1)}_{-\nu}(z)=-\overline
{ H^{(1)}_{-\bar  \nu}(-\bar z)}.\label{eq:6}
\end{align}
\end{subequations}

\subsection{Model operator and construction of model resolvent}
Consider 
\begin{equation}
  \label{eq:25}
  H^D=-\tfrac{\d^2}{\d r^2}+\tfrac{\nu^2-1/4}{r^2}\text { on }\vH^D
=L^2([1, \infty[) 
\end{equation}
 with Dirichlet boundary condition at $r=1$. Let
 for any $\zeta\in \C$, $\phi=\phi_\zeta$ be the (unique) solution to 
\begin{equation}\label{eq:equamotionBA}
\begin{cases}
-\phi''(r)+\tfrac{\nu^2-1/4}{r^2}\phi(r)&=\zeta \,\phi(r)\\
\phi(1)&=0\\
\phi'(1)&=1
\end{cases}\;.
\end{equation}
  This solution $\phi_\zeta$ is entire in $\zeta$, and 
  \begin{equation}
    \label{eq:7}
    \phi_0(r)=\frac {r^{1/2+\nu}-r^{1/2-\nu}}{2\nu}.
  \end{equation}

In fact, cf. \cite[(3.6.27)]{Ta1},
\begin{equation}
  \label{eq:20}
  \phi_{k^2}(r)=\tfrac{\pi}{2\sin (\nu \pi )}r^{1/2}
\parb{J_{\bar \nu}(k)J_{\nu}(kr)-J_{\nu}(k)J_{\bar \nu}(kr)}.
\end{equation}

Let for $k\in \C\setminus
\{0\}$ with  $\Im k\geq 0$ and $H^{(1)}_\nu(k)\neq 0$
\begin{equation}
  \label{eq:8}
  \phi^+_k(r)=r^{1/2}\frac {H^{(1)}_\nu(kr)}{H^{(1)}_\nu(k)}.
\end{equation} 

Due to (\ref{eq:6}) the dependence of $\nu$ in $\phi^+_k$ is through
$\nu^2$ only, i.e. replacing $\nu\to \bar\nu$ yields the same expression  (obviously this is also true for $\phi_{k^2}$). 
Notice also that $\phi_{k^2}$  and $\phi^+_k$ solve 
the equation 
\begin{equation}
  \label{eq:9}
  -\phi''(r)+\tfrac{\nu^2-1/4}{r^2}\phi(r)=k^2 \,\phi(r).
\end{equation} The kernel $R^D_k(r,r')$ of $(H^D-k^2)^{-1}$ for $k$ with  $\Im k> 0$ and $H^{(1)}_\nu(k)\neq 0$ is given by
\begin{equation}
  \label{eq:10}
  R^D_k(r,r')=\phi_{k^2}(r_<)\phi^+_k(r_>);
\end{equation} here and henceforth $\;r_<:=\min (r,r')\mand r_>:=\max
(r,r')$. (The fact that the right hand side of (\ref{eq:10}) defines a
bounded operator on $\vH^D$ follows from the Schur test and the bounds (\ref{eq:18aaa}) and
(\ref{eq:18bdd}) given below.)
 The condition $H^{(1)}_\nu(k)\neq 0$ is fulfilled for $k\in\{\Im k>0\}\setminus
\i \R_+$ since otherwise $k^2$ would be a non-real eigenvalue of
$H^D$. The zeros in $\i \R_+$ correspond to the negative eigenvalues
of $H^D$. They constitute  a sequence  accumulating at zero.

We have the properties, cf. (\ref{eq:6}),
\begin{equation}
  \label{eq:12}
 {R^D_k(r,r')}=  \overline {R^D_{-\bar k}(r,r')}=R^D_k(r',r).
\end{equation}
In the regime where $|k|$ is very small and stays away from the
imaginary axis, more precisely in $\Gamma_{\theta,\epsilon}$ for any $\theta \in [0,\pi/2[$ and $\epsilon>0$,  we can derive a lower bound of $|H^{(1)}_\nu(k)|$ as
follows:
From (\ref{eq:1}) and (\ref{eq:2}) we obtain that
\begin{equation}
  \label{eq:13}
 J_\nu(z)= \tfrac{(z/2)^{\nu}}{\Gamma(\nu+1)}\big (1+O(z^2)\big ).
\end{equation} Whence (recall that   $\nu=-\i \sigma$ where
$\sigma>0$)  we obtain
with $C_\nu:=|\Gamma(\nu +1)\sin (\nu\pi)|$
\begin{align}
 |H^{(1)}_\nu(k)|& \geq \parb{|\e^{-\sigma \arg k}-\e^{-\sigma
      \pi}\e^{\sigma \arg k}|-O(|k|^2)}/C_\nu\nonumber\\
& \geq \e^{-\sigma
      \theta}\parb{1-\e^{-\sigma
      (\pi-2\theta)}}/C_\nu-O(|k|^2)\mforall k\in \Gamma_{\theta,\epsilon}.
 \label{eq:14} 
\end{align} 
In particular for $\epsilon>0$ small enough (depending on $\theta$)
\begin{equation}
  \label{eq:15}
 \forall  k\in \Gamma_{\theta,\epsilon}:\, |H^{(1)}_\nu(k)|\geq \e^{-\sigma
      \pi/2}\parb{1-\e^{-\sigma
      (\pi-2\theta)}}/C_\nu.
\end{equation} 

Note that the bound (\ref{eq:15}) implies that there is a limiting
absorption principle at all real $E=k^2$ with $k\in
\Gamma_{\theta,\epsilon}$. In particular $H^D$ does not have small
positive eigenvalues.

\subsection{Asymptotics of model resolvent}

Let us note the following global bound (cf. (\ref{eq:4})) 
\begin{equation}
  \label{eq:18aaa}
 |\phi^+_k(r)|\leq  C\parb{\tfrac{r}{\inp{kr}}}^{1/2}\e
 ^{-(\Im k) r}\mforall k\in \Gamma_{\theta,\epsilon}\mand r\geq 1.
\end{equation}

Let
\begin{equation}
  \label{eq:16}
  D_\nu=2^{-\nu}/\Gamma(\nu +1).
\end{equation} Notice that $\bar D_\nu=D_{-\nu}$. By (\ref{eq:3}) and (\ref{eq:13}) we obtain the
following asymptotics of $\phi^+_k$ as $k\to 0$ in $\Gamma_{\theta,\epsilon} $:
\begin{equation}
  \label{eq:17}
 \phi^+_k(r)= r^{1/2}\frac{\bar D_\nu r^{-\nu}k^{-\nu}-\e^{-\sigma\pi}D_\nu r^{\nu}k^{\nu}+O\parb{(kr)^2}} {\bar D_\nu k^{-\nu}-\e^{-\sigma\pi}D_\nu k^{\nu}+O\parb{k^2}}.
\end{equation}

Introducing
\begin{equation}
  \label{eq:18ii}
  \zeta(k)=\frac{2\i \sigma \e^{-\sigma\pi}D_\nu k^{2\nu}}{\bar D_\nu-D_\nu\e^{-\sigma\pi}k^{2\nu}},
\end{equation} we can slightly modify  (\ref{eq:17}) (in terms of (\ref{eq:7})
and by using (\ref{eq:18aaa}))  as 
\begin{equation}
  \label{eq:18}
 \phi^+_k(r)= r^{1/2-\nu}+\zeta(k)\phi_0(r)+ r^{1/2}O\parb{(kr)^2} +\parb{\tfrac{r}{\inp{kr}}}^{1/2}\e
 ^{-(\Im k) r} O\parb{k^2}.
\end{equation} There is a ``global''  bound of the third term  (due to  (\ref{eq:18aaa})):
\begin{equation}
  \label{eq:22ii}
  |
r^{1/2}O\parb{(kr)^2}|\leq Cr^{1/2}\tfrac{|kr|^2}{\inp{kr}^{2}}\mforall k\in \Gamma_{\theta,\epsilon}\mand r\geq 1.
\end{equation}

As for $\phi_{k^2}$ we first note 
 the following global bound (cf. (\ref{eq:1}), (\ref{eq:20}) and
 \cite[Theorem 4.6.1]{Ol})
\begin{equation}
  \label{eq:18bdd}
 |\phi_{k^2}(r)|\leq C\parb{\tfrac{r}{\inp{kr}}}^{1/2}\e
 ^{(\Im k) r}\mforall k\in \Gamma_{\theta,\epsilon}\mand r\geq 1.
\end{equation}

 Using (\ref{eq:18bdd})  we obtain similarly
\begin{equation}
  \label{eq:18b}
 \phi_{k^2}(r)= \phi_{0}(r)+ r^{1/2}O\parb{(kr)^2}
 +\parb{\tfrac{r}{\inp{kr}}}^{1/2}\e
 ^{(\Im k) r} O\parb{k^2}.
\end{equation}  
There is a global bound of the second  term:
\begin{equation}
  \label{eq:22}
  |
r^{1/2}O\parb{(kr)^2}|\leq Cr^{1/2}\tfrac{|kr|^2}{\inp{kr}^{2}}\e
 ^{(\Im k) r} \mforall k\in \Gamma_{\theta,\epsilon}\mand r\geq 1.
\end{equation}

Whence in combination with (\ref{eq:10}) we obtain uniformly in $k\in
\Gamma_{\theta,\epsilon}$ and $r, r'\geq 1$
\begin{subequations}
 \begin{align}
  \label{eq:19}
  R^D_k(r,r')&=R_0^D(r,r')+\zeta(k) T(r,r')+r^{1/2}(r')^{1/2}E_k(r,r');\\
R_0^D(r,r') &=\phi_{0}(r_<)r_>^{1/2-\nu },\label{eq:19i}\\
T(r,r') &=\phi_{0}(r) \phi_{0}(r'),\label{eq:19ii}\\
|E_k(r,r')| &\leq C \Big (
  \frac {|k|r_>}{\inp{kr_>}}\Big )^2.\label{eq:19iii}
\end{align} 
\end{subequations}
 
Clearly $T=|\phi_{0}\rangle \langle \phi_{0}|$ is a
rank-one  operator and the function $\zeta$ has a non-trivial oscillatory
behaviour. The error estimate can be replaced by:
\begin{equation}
  \label{eq:21}
  \exists C>0\,\forall \delta\in[0,2]:|E_k(r,r')| \leq C |kr_>|^\delta
  \mforall k\in
  \Gamma_{\theta,\epsilon}\mand r, r'\geq 1.
\end{equation} In particular introducing weighted spaces
\begin{equation*}
  \vH_s^D=\inp{r}^{-s}\vH^D,
\end{equation*}
 we obtain
\begin{equation}
  \label{eq:23}
  \forall \,s>1: \;\lim _{\Gamma_{\theta,\epsilon}\ni k\to 0}\big \|R^D_k-R_0^D-\zeta(k)
  T\big \|_{\vB(\vH_s^D,\vH_{-s}^D)}=0.
\end{equation} 
In fact we deduce from (\ref{eq:19})-(\ref{eq:19iii}) the following more precise result:
 \begin{lemma}
   \label{lemma:asympt-model-resolv} For all $s>s'>1,\,s'\leq 3$, there
   exists $C>0$:
 \begin{equation}
  \label{eq:24}
  \big \|(H^D-\i )\parb{R^D_k-R_0^D-\zeta(k)
  T}\big \|_{\vB(\vH_s^D,\vH_{-s}^D)} \leq C|k|^{s'-1}\mforall k\in \Gamma_{\theta,\epsilon}.
\end{equation}
\end{lemma}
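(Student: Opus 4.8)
Throughout I write $\vE_k:=R^D_k-R_0^D-\zeta(k)T$, which by \eqref{eq:19} has integral kernel $r^{1/2}(r')^{1/2}E_k(r,r')$ with $E_k$ controlled by \eqref{eq:21}. My plan is to reduce \eqref{eq:24} to the two estimates
\begin{equation*}
  \sup_{k\in\Gamma_{\theta,\epsilon}}\|R^D_k\|_{\vB(\vH_s^D,\vH_{-s}^D)}<\infty
  \mand \|\vE_k\|_{\vB(\vH_s^D,\vH_{-s}^D)}\le C|k|^{s'-1}\quad(k\in\Gamma_{\theta,\epsilon}),
\end{equation*}
and then to prove both by a Hilbert--Schmidt estimate on the weighted kernels.

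The reduction rests on the operator identity $(H^D-\i)\vE_k=k^2R^D_k-\i\,\vE_k$. To see it I would note that $H^DR^D_k=I+k^2R^D_k$, since $R^D_k=(H^D-k^2)^{-1}$ (understood in the limiting-absorption sense when $k$ is real); that $H^DR_0^D=I$, because $R_0^D$ in \eqref{eq:19i} is a Green's function for $H^D$ at zero energy --- both $\phi_0$ from \eqref{eq:7} and $r\mapsto r^{1/2-\nu}$ solve $-\phi''+\tfrac{\nu^2-1/4}{r^2}\phi=0$, $\phi_0$ satisfies the Dirichlet condition at $r=1$, and their Wronskian (computed at $r=1$) equals $1$; and that $H^DT=0$, because $T=|\phi_0\rangle\langle\phi_0|$ in \eqref{eq:19ii} and $\phi_0$ solves the homogeneous equation. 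Here $H^D$ is taken as the differential expression, since $R_0^Df$ and $Tf$ solve the homogeneous equation beyond $\supp f$ and hence in general lie only in $\vH_{-s}^D$, $s>1$. Granting the two displayed estimates, \eqref{eq:24} then follows from $\|(H^D-\i)\vE_k\|\le|k|^2\|R^D_k\|+\|\vE_k\|$ (all norms in $\vB(\vH_s^D,\vH_{-s}^D)$) together with $|k|\le\epsilon<1$ and $s'-1\le2$ (from $s'\le3$), which give $C|k|^2+C|k|^{s'-1}\le C|k|^{s'-1}$.

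Both of the displayed estimates follow from pointwise kernel bounds, via $\|A\|_{\vB(\vH_s^D,\vH_{-s}^D)}=\|\inp{r}^{-s}A\inp{r}^{-s}\|_{\vB(\vH^D)}$ and domination by the Hilbert--Schmidt norm of the operator with kernel $\inp{r}^{-s}A(r,r')\inp{r'}^{-s}$. For $R^D_k$: combining \eqref{eq:18aaa} and \eqref{eq:18bdd} gives $|R^D_k(r,r')|=|\phi_{k^2}(r_<)\phi^+_k(r_>)|\le C(r_<r_>)^{1/2}=C(rr')^{1/2}$ uniformly on $\Gamma_{\theta,\epsilon}$ (the exponential factors multiply to $\e^{(\Im k)(r_<-r_>)}\le1$), whence $\|\inp{r}^{-s}R^D_k\inp{r}^{-s}\|_{\mathrm{HS}}^2\le C\bigl(\int_1^\infty\inp{r}^{-2s}r\,\d r\bigr)^2<\infty$ for $s>1$, uniformly in $k$. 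For $\vE_k$: by \eqref{eq:21}, for any $\delta\in[0,2]$ the kernel of $\vE_k$ is bounded by $C|k|^\delta(rr')^{1/2}r_>^\delta$, so, splitting the double integral into $r'\le r$ and $r'\ge r$ and using the symmetry of the kernel,
\begin{equation*}
  \big\|\inp{r}^{-s}\vE_k\inp{r}^{-s}\big\|_{\mathrm{HS}}^2\le2C^2|k|^{2\delta}\int_1^\infty\inp{r}^{-2s}r^{1+2\delta}\Bigl(\int_1^r\inp{r'}^{-2s}r'\,\d r'\Bigr)\d r.
\end{equation*}
The inner integral is finite for $s>1$ and the outer one is finite as soon as $2\delta<2s-2$. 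Taking $\delta=s'-1$ --- which is legitimate because $1<s'<s$ forces $0<s'-1<s-1$ and $s'\le3$ forces $s'-1\le2$ --- yields $\|\vE_k\|_{\vB(\vH_s^D,\vH_{-s}^D)}\le C|k|^{s'-1}$, which completes the argument.

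The only genuinely delicate step is the weighted estimate on $\vE_k$. Since both $R_0^D$ and $\zeta(k)T$ grow like $r_>^{1/2}$, while $R^D_k$ itself need not decay at infinity when $\Im k=0$, the kernel of $\vE_k$ grows in the larger of the two variables; thus one really needs the full weights with $s>1$ to reach a Hilbert--Schmidt bound, and one needs the flexibility in \eqref{eq:21} to trade decay in $r_>$ for a power of $|k|$. Arranging that this power is exactly $s'-1$ is where the hypotheses $s'>1$, $s'<s$ and $s'\le3$ are all used; the remaining manipulations are routine.
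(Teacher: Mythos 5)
Your proof is correct and takes essentially the route the paper intends (the paper gives no explicit proof, merely noting that the lemma is ``deduced from \eqref{eq:19}--\eqref{eq:19iii}''): namely the kernel decomposition with the flexible error bound \eqref{eq:21}, a weighted Hilbert--Schmidt estimate with the choice $\delta=s'-1$, and the identity $(H^D-\i)\parb{R^D_k-R_0^D-\zeta(k)T}=k^2R^D_k-\i\parb{R^D_k-R_0^D-\zeta(k)T}$ to dispose of the unbounded factor. All the verifications (the Wronskian normalization of $R_0^D$, the uniform bound on $R^D_k$ from \eqref{eq:18aaa} and \eqref{eq:18bdd}, and the role of $s>s'>1$, $s'\le 3$) check out.
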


\bigskip

\section{Asymptotics for full Hamiltonian, compactly supported perturbation} \label{Asymptotics for full Hamiltonian, compactly supported perturbation}

Consider with $V_\infty(r):=\tfrac{\nu^2-1/4}{r^2}\chi(r>1)$
\begin{equation}
  \label{eq:25c}
  H=-\tfrac{\d^2}{\d r^2}+V_\infty+V\text { on }\vH
:=L^2(]0, \infty[)
\end{equation}  with Dirichlet boundary condition at $r=0$. As for the
potential $V$ we impose in this section 

\begin{cond}\label{cond:asympt-full-hamilt}
 \begin{enumerate}[1)]
  \item \label{item:1}
  $V\in C(]0,\infty[,\R)$,
\item \label{item:2} $\exists R>3:\, V(r)=0 \mfor r\geq R$,

\item \label{item:3} $\exists C_1,C_2>0\,\exists \kappa>0:
  C_1(r^{-2}+1)\geq V(r)\geq (\kappa^2-1/4)r^{-2}-C_2$.
  \end{enumerate} 
\end{cond}
Notice that  the operator $H$ is defined in terms of the (closed) 
Dirichlet form on the Sobolev space  $H^1_0(\R_+)$ (i.e. $H$   is the
Friedrichs extension), cf. \cite[Lemma 5.3.1]{Da}. For the limiting
cases  $C_1=\infty$ and/or
$\kappa=0$ in \ref{item:3} it is still possible to define $H$ as the
Friedrichs extension of the action on   $C_c^{\infty}(\R_+)$ however
the form domain of the extension might be different from $H^1_0(\R_+)$ and
some arguments of this paper would be  more complicated. An example of this type (with $\kappa=0$) is discussed
in Appendix \ref{Case (d,l)eq (2,0)}.
 If $V(r)\geq 3/4r^{-2}-C$ the operator $H$ is
  essentially self-adjoint on $C_c^{\infty}(\R_+)$, cf. \cite[Theorem
  X.10]{RS}.

In terms of  the resolvent $R_k^D$ considered in Section \ref{Model
  asymptotics} and cutoffs $\chi_1=\chi_1(r<7)$ and
$\chi_2=\chi_2(r>7)$ we introduce for
$k\in \Gamma_{\theta,\epsilon}$ 
\begin{equation}
  \label{eq:27}
  G_k=\chi_1\big (H-\tfrac{\Re k}{|\Re k|}\i\big )^{-1}\chi_1+\chi_2R_k^D\chi_2.
\end{equation}

Let 
\begin{equation}
  \label{eq:28}
  G_0^{\pm}=\chi_1\big (H\mp\i\big )^{-1}\chi_1+\chi_2R_0^D\chi_2
\end{equation}  and
\begin{equation}
  \label{eq:29}
  K^{\pm}=HG_0^{\pm}-I.
\end{equation} Notice that the operators $K^{\pm}$ are compact on 
  $\vH_s:=\inp{r}^{-s}\vH$ for $s>1$.

Due to Lemma
  \ref{lemma:asympt-model-resolv} we have  the following expansions
  in $\vB(\vH_s)$ (with $s>s'>1$ and $\,s'\leq 3$)
  \begin{equation}
    \label{eq:26}
    \forall k\in \Gamma_{\theta,\epsilon}^{\pm}:\,(H-k^2)G_k=I+ K^{\pm}+\zeta(k)|\psi_0\rangle \langle \chi_2\phi_0|+O\big (|k|^{s'-1}\big );\;\psi_0:=H\chi_2\phi_0.
  \end{equation}

  \begin{lemma}
    \label{lemma:asympt-full-hamilt}
For all $k\in \Gamma_{\theta,\epsilon}^{\pm}$ the following form
inequality holds (on $\vH_s$ for any $s> 1$)
\begin{equation}
  \label{eq:31}
  \pm \Im G_k \geq \chi_1\big (H\pm\i\big )^{-1}\big (H\mp\i\big )^{-1}\chi_1.
\end{equation}
\end{lemma}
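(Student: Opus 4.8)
The plan is to establish \eqref{eq:31} by splitting $G_k$ into its two pieces according to \eqref{eq:27} and handling each separately, using in an essential way the sign structure of the spectral parameter in each piece and the Dirichlet‐form definition of $H$. First I would write $\pm\Im G_k = \pm\Im\big(\chi_1(H-\tfrac{\Re k}{|\Re k|}\i)^{-1}\chi_1\big) \pm \Im\big(\chi_2 R_k^D\chi_2\big)$, and observe that on $\Gamma_{\theta,\epsilon}^{\pm}$ one has $\tfrac{\Re k}{|\Re k|}=\pm 1$, so the first term is $\pm\Im\big(\chi_1(H\mp\i)^{-1}\chi_1\big)$. For a self-adjoint operator the standard identity gives $\Im (H\mp\i)^{-1} = \mp (H\pm\i)^{-1}(H\mp\i)^{-1} \cdot(\mp1)$; being careful with signs, $\pm\Im\big(\chi_1(H\mp\i)^{-1}\chi_1\big) = \chi_1 (H\pm\i)^{-1}(H\mp\i)^{-1}\chi_1$, which is exactly the claimed right-hand side. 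Because $\chi_1$ is real-valued (hence symmetric) this sandwiching is legitimate as a form inequality on $\vH_s$.

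Next I would show that the remaining term $\pm\Im\big(\chi_2 R_k^D\chi_2\big)$ is nonnegative (as a form on $\vH_s$), so that it can simply be discarded from the lower bound. Here the point is that $R_k^D=(H^D-k^2)^{-1}$ with $\Im k>0$, so $\Im(k^2)=2(\Re k)(\Im k)$ has the sign of $\Re k$, i.e.\ the sign $\pm$ on $\Gamma_{\theta,\epsilon}^{\pm}$. The resolvent identity yields
\begin{equation*}
  \Im R_k^D = \tfrac{1}{2\i}\big(R_k^D-(R_k^D)^*\big)=\tfrac{1}{2\i}\big(R_k^D-R_{\bar k}^D\big)=(\Im k^2)\,(R_k^D)^* R_k^D\,,
\end{equation*}
using $(R_k^D)^*=(H^D-\bar k^2)^{-1}=R_{\bar k}^D$ and $k^2-\bar k^2 = 2\i\,\Im(k^2)$. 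Hence $\pm\Im R_k^D = |\Im k^2|\,(R_k^D)^* R_k^D\geq 0$, and conjugating by the real cutoff $\chi_2$ preserves nonnegativity: $\pm\Im(\chi_2 R_k^D\chi_2)=|\Im k^2|\,\chi_2 (R_k^D)^* R_k^D\chi_2\geq 0$. Adding the two contributions gives \eqref{eq:31}.

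The only genuine subtlety — and the step I would treat most carefully — is the passage to forms on the weighted space $\vH_s$ and the justification that all the manipulations above are valid there rather than just on $\vH$ itself. Concretely, one must check that $G_k$ and the operators $\chi_1(H\mp\i)^{-1}\chi_1$, $\chi_2 R_k^D\chi_2$ extend to bounded operators $\vH_s\to\vH_{-s}$ (which follows from the limiting absorption bounds implicit in \eqref{eq:15} and the Schur-type estimates behind \eqref{eq:10}), that ``$\Im$'' is interpreted via the sesquilinear form $\langle u, \cdot\,u\rangle$ for $u$ in the relevant dense class, and that the adjoint relations $(\chi_1(H\mp\i)^{-1}\chi_1)^*=\chi_1(H\pm\i)^{-1}\chi_1$ and $(R_k^D)^*=R_{\bar k}^D$ remain correct in this dual pairing. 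Given the self-adjointness of $H$ and $H^D$ and the reality of the cutoffs, these are routine but should be spelled out; no deeper input is needed, so I expect the lemma to be short once the bookkeeping of signs and the form interpretation is pinned down.
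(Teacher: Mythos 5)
Your argument is correct and is essentially the paper's own: the paper disposes of the lemma in one line (``obvious from the fact that $\pm\Im R_k^D\geq 0$''), which implicitly contains exactly your two steps --- the identity $\pm\Im\big(\chi_1(H\mp\i)^{-1}\chi_1\big)=\chi_1(H\pm\i)^{-1}(H\mp\i)^{-1}\chi_1$ for the first summand of $G_k$ and the nonnegativity of $\pm\Im\big(\chi_2R_k^D\chi_2\big)$ for the second. The only cosmetic remark is that for real $k\in\Gamma_{\theta,\epsilon}^{\pm}$ the positivity of $\pm\Im R_k^D$ is obtained as a limit from $\Im k>0$ (where your resolvent-identity computation applies), but this changes nothing.
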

\begin{proof}
  This is obvious from the fact that  $\pm \Im R_k^D\geq 0$.
\end{proof}

  \begin{prop} \label{prop:no_kernel} For all $s>1$ the operators  $I+ K^{\pm}\in\vB(\vH_s)$
    have   zero null space, i.e.
    \begin{equation}
      \label{eq:32}
      \Ker \parb{I+ K^{\pm}}=\{0\}.
    \end{equation}
\end{prop}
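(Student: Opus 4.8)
The plan is to show that the operators $I+K^{\pm}$, which are compact perturbations of the identity on $\vH_s$, are injective by ruling out any nontrivial $u\in\vH_s$ with $(I+K^{\pm})u=0$, i.e.\ $HG_0^{\pm}u=0$. The natural strategy is: first use the equation to produce an honest (distributional, then classical) solution $w:=G_0^{\pm}u$ of $Hw=0$ on $\R_+$ with Dirichlet boundary behaviour at $r=0$, then show $w$ has enough decay at infinity to force $w\in L^2$, then use self-adjointness (and positivity/semiboundedness of $H$) to conclude $w=0$, and finally read back that $u=0$.

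First I would unwind the definitions. If $(I+K^{\pm})u=0$ then $HG_0^{\pm}u=0$. Set $w=G_0^{\pm}u=\chi_1(H\mp\i)^{-1}\chi_1 u+\chi_2 R_0^D\chi_2 u$. Since $u$ is compactly-weighted $L^2$, both pieces are in the respective form domains and $w$ is $H^1_{\rm loc}$; from $Hw=0$ one gets $w\in C^2$ away from $0$ and $w$ solves $-w''+(V_\infty+V)w=0$ on $\R_+$, with the boundary condition at $0$ inherited from the Friedrichs/Dirichlet construction (here I would invoke Condition~\ref{cond:asympt-full-hamilt}\ref{item:3} to control the local singularity, exactly as in the definition of $H$). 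For $r\geq R$ (where $V=0$ and $V_\infty=(\nu^2-1/4)r^2{}^{-1}$) the solution is an explicit combination of $r^{1/2\pm\nu}$; the key point is that the $\chi_2 R_0^D\chi_2 u$ term, built from $R_0^D(r,r')=\phi_0(r_<)r_>^{1/2-\nu}$, forces $w(r)$ to behave like $r^{1/2-\nu}=r^{1/2}r^{\i\sigma}$ for large $r$ (the "outgoing/incoming" branch selected by the sign of $\Re k$, i.e.\ the $\mp\i$ in $(H\mp\i)^{-1}$). Since $\nu=-\i\sigma$ is purely imaginary, $|w(r)|\sim c\, r^{1/2}$, which is \emph{not} square-integrable; the only way out is $c=0$, i.e.\ $w$ is the recessive solution $\propto r^{1/2+\nu}$... but that is $r^{1/2}r^{-\i\sigma}$, also of modulus $r^{1/2}$, hence again not $L^2$ — so in fact one concludes directly $w\equiv 0$ on $[R,\infty[$, and then $w\equiv0$ on $\R_+$ by the Cauchy uniqueness for the ODE.

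The cleanest way to make the "pick out the right branch" step rigorous is the form-inequality route: from Lemma~\ref{lemma:asympt-full-hamilt} and its $k=0$ analogue $\pm\Im G_0^{\pm}\geq \chi_1(H\pm\i)^{-1}(H\mp\i)^{-1}\chi_1\geq 0$, pairing $Hw=0$ suitably gives $\inp{u,\Im(G_0^{\pm}u)}=\mp\Im\inp{G_0^{\pm}u, HG_0^{\pm}u}=0$ up to boundary terms at infinity coming from the non-$L^2$ model part; evaluating the Wronskian-type boundary term of $\chi_2 R_0^D\chi_2 u$ at $r\to\infty$ pins down the coefficient of the oscillatory branch to be zero, which combined with the explicit form of $R_0^D$ kills the tail of $w$. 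Then $w=0$ gives $\chi_1(H\mp\i)^{-1}\chi_1 u=-\chi_2 R_0^D\chi_2 u$, and comparing supports/regularity forces each side to vanish, whence $(H\mp\i)^{-1}\chi_1 u=0$ so $\chi_1 u=0$, and $R_0^D\chi_2 u=0$ so $\chi_2 u=0$; thus $u=0$.

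The main obstacle I expect is the boundary-term bookkeeping at $r\to\infty$: because $R_0^D$ is genuinely not an $L^2$ operator (it involves the non-decaying branch $r^{1/2-\nu}$), the integration-by-parts identities have surviving Wronskian terms at infinity that must be computed explicitly from \eqref{eq:19i}, and one must check that these terms indeed have the sign/structure that forces the oscillatory coefficient to vanish rather than merely bounding things. The local behaviour at $r=0$ under Condition~\ref{cond:asympt-full-hamilt}\ref{item:3} is a secondary technical point (handled by the Friedrichs-extension characterisation, with the degenerate case $\kappa=0$ deferred to the appendix as the authors note). Everything else — compactness of $K^{\pm}$, elliptic regularity of $w$, Cauchy uniqueness for the ODE — is routine.
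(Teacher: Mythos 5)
Your skeleton has the right germ --- the integration by parts at infinity producing a Wronskian term that kills the coefficient of the oscillatory branch is indeed the paper's central step --- but two of your load-bearing steps fail as stated. The first is the claim that $w=G_0^{\pm}u$ ``has enough decay to force $w\in L^2$'', so that non-square-integrability of $r^{1/2\pm\nu}$ kills the tail. There is no such decay available a priori: $G_0^{\pm}$ maps $\vH_s$ only into $\vH_{-s}$, and generically $w(r)\sim\inp{\chi_2\phi_0,u}\,r^{1/2-\nu}$ at infinity, which is perfectly admissible in $\vH_{-s}$ for $s>1$. The paper's route is the reverse of yours: from $Hw=0$, integration by parts gives $0=\Im\inp{w,-Hw}=\lim_{r\to\infty}\Im(\bar{w}\,w')(r)=\sigma|\inp{\chi_2\phi_0,u}|^2$, so the coefficient of $r^{1/2-\nu}$ vanishes; \emph{only then} does Cauchy--Schwarz on the kernel \eqref{eq:19i} give $w=O(r^{3/2-s})=o(r^{1/2})$, and since \emph{both} branches have modulus exactly $r^{1/2}$ both coefficients must vanish, whence $w\equiv0$ beyond $\supp V$ and then everywhere by unique continuation (see Lemma \ref{lemma_bound} for the general statement). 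Note also that ``positivity/semiboundedness of $H$'' cannot be invoked to exclude a zero eigenvalue: under the standing hypothesis $H$ has infinitely many negative eigenvalues accumulating at $0$, and the absence of an $L^2$ zero-energy solution is again an ODE fact, not an operator-theoretic one.

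The second gap is the final step. From $w=0$ you want to conclude that $\chi_1(H\mp\i)^{-1}\chi_1u$ and $\chi_2R_0^D\chi_2u$ vanish separately ``by comparing supports'', but $\chi_1=\chi_1(r<7)$ is supported in $]0,14]$ and $\chi_2=\chi_2(r>7)$ in $[7,\infty[$, so the two supports overlap on $[7,14]$ and the identity $\chi_1(H\mp\i)^{-1}\chi_1u=-\chi_2R_0^D\chi_2u$ gives no information there; the splitting does not follow. This is exactly where the paper uses the positivity of Lemma \ref{lemma:asympt-full-hamilt} (which you quote, but deploy for the wrong purpose): since $w=0$,
\begin{equation*}
0=\Im\inp{u,w}=\lim_{\Gamma^{+}_{\theta,\epsilon}\ni k\to0}\Im\inp{u,G_ku}\geq\big\|(H-\i)^{-1}\chi_1u\big\|^2,
\end{equation*}
so $\chi_1u=0$; only after that is $G_0^{+}u=\chi_2R_0^D\chi_2u=0$ everywhere, and applying $H^D$ on the interior of $\supp\chi_2$ yields $\chi_2u=0$, hence $u=0$. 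Without this positivity step (or an equivalent substitute) your argument does not close.
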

\begin{proof} We prove only (\ref{eq:32}) for the superscript ``$+$
  case''. The ``$-$ case'' is similar.   Suppose
  $0=HG_0^+f$ for some $f\in \vH_s$. We shall show that $f=0$. Let
  $u_0=G_0^+f$. Integrating by parts yields
  \begin{align}
    \label{eq:33}
     0&=\Im \inp{u_0,-Hu_0}=
\lim_{r\to \infty}\Im \parb{\bar u_0 u'_0}(r)\nonumber\\& =
\lim_{r\to \infty}\Im \parb{(1/2-\nu)|u_0|^2(r)/r}=\sigma |\inp{\chi_2\phi_0,f}|^2.
 \end{align}
 
So
\begin{equation}
  \label{eq:34}
  \inp{\chi_2\phi_0,f}=0,
\end{equation} and therefore (seen again  by using the explicit
kernel of $R^D_0$ and by estimating by the Cauchy-Schwarz inequality)
\begin{equation}
  \label{eq:35}
  u_0=O(r^{3/2-s})\mand u_0'=O(r^{1/2-s})\mfor r\to \infty.
\end{equation} From (\ref{eq:35}) we can conclude that 
\begin{equation}
  \label{eq:36}
   u_0=0;
\end{equation} this can be seen by writing $u_0$
as a linear combination of $r^{1/2+\nu}$ and $r^{1/2-\nu}$ at
infinity, deduce that $u_0$ vanishes at infinity
and then  invoke unique continuation. For a more general result (with
detailed proof) see Lemma \ref{lemma_bound}.

Using Lemmas \ref{lemma:asympt-model-resolv} and \ref{lemma:asympt-full-hamilt}, (\ref{eq:34}) and
(\ref{eq:36}) 
we compute
\begin{equation}
  0=\Im \inp{f,u_0}=\lim_{\Gamma_{\theta,\epsilon}^{+}\ni k\to 0}\Im
  \inp{f,G_kf}\geq \|(H-\i\big )^{-1}\chi_1f\|^2.
\end{equation}
 
We conclude that 
\begin{equation}
  \label{eq:37}
  \chi_1f=0.
\end{equation} So $0=G_0^+f=\chi_2R_0^D\chi_2f$, and therefore
\begin{equation}
  \label{eq:38}
  R_0^D\chi_2f=0\text{ on }\supp (\chi_2).
\end{equation} We apply $H^D$ to (\ref{eq:38}) and conclude that 
\begin{equation}
  \label{eq:38b}
 \chi_2f=0,
\end{equation} so indeed $f=0$.

\end{proof}

\subsection{Construction of resolvent} 
Due to Proposition \ref{prop:no_kernel}
we can write, cf. (\ref{eq:26}), 
\begin{equation}
    \label{eq:26bb}
   (H-k^2)G_k(I+ K^{\pm})^{-1}=I+\zeta(k)|\psi_0\rangle \langle \phi^\pm | +O\big (|k|^{s'-1}\big ), 
  \end{equation} 
  for $  k\in \Gamma_{\theta,\epsilon}^{\pm}:\,$, where
  \begin{equation}
    \label{eq:58}
   \phi^{\pm}:=\parb{(I+ K^{\pm})^{-1}}^*\chi_2\phi_0. 
  \end{equation}
 We have
  \begin{equation}
    \label{eq:39}
    \big (I+\zeta(k)|\psi_0\rangle \langle \phi^{\pm}|\big )^{-1}=I-\tfrac{\zeta(k)}{\eta^{\pm}(k)}|\psi_0\rangle \langle \phi^{\pm}|;\,\eta^{\pm}(k):=1+\zeta(k)\inp{\phi^{\pm},\psi_0}.
  \end{equation} Of course this is under the condition that 
  \begin{equation}
    \label{eq:41}
    \eta^{\pm}(k)\neq 0.
  \end{equation}

  \begin{lemma}
    \label{lemma:constr-resolv}
For all $k\in\Gamma_{\theta,\epsilon}^{\pm}$ the condition
(\ref{eq:41}) is fulfilled.
  \end{lemma}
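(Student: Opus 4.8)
The plan is to establish $\eta^{\pm}(k)\neq 0$ by a contradiction argument mirroring the structure of the proof of Proposition~\ref{prop:no_kernel}: if $\eta^{\pm}(k_0)=0$ for some $k_0\in\Gamma_{\theta,\epsilon}^{\pm}$, then the operator $I+\zeta(k_0)|\psi_0\rangle\langle\phi^{\pm}|$ is not invertible, and (combined with \eqref{eq:26bb} and the invertibility of $I+K^{\pm}$ from Proposition~\ref{prop:no_kernel}) this should force $H-k_0^2$ to have a nontrivial kernel-like object, contradicting either self-adjointness (when $k_0^2\notin\R$) or the limiting absorption principle / absence of positive eigenvalues noted after \eqref{eq:15} (when $k_0^2>0$). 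Concretely, since $\psi_0=H\chi_2\phi_0$ and $\phi^{\pm}=((I+K^{\pm})^{-1})^{*}\chi_2\phi_0$, the rank-one operator $\zeta(k_0)|\psi_0\rangle\langle\phi^{\pm}|$ has the same structure as the correction term appearing in $(H-k_0^2)G_{k_0}$; so from \eqref{eq:26bb} one reads off that $(H-k_0^2)$ applied to $G_{k_0}(I+K^{\pm})^{-1}$ times the appropriate vector annihilates the relevant functional, producing a genuine solution of $(H-k_0^2)u=0$ in a weighted space.

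The cleanest route, I expect, is to use the form inequality of Lemma~\ref{lemma:asympt-full-hamilt} together with the asymptotic identity \eqref{eq:26bb}. Suppose $\eta^{\pm}(k_0)=0$. Then $\inp{\phi^{\pm},\psi_0}=-1/\zeta(k_0)$, and one checks that $u_0:=G_{k_0}(I+K^{\pm})^{-1}\psi_0$ (or a suitable scalar multiple) satisfies $(H-k_0^2)u_0=0$ in the distributional sense on $]0,\infty[$, with $u_0\in\vH_{-s}$. Because $k_0\in\Gamma_{\theta,\epsilon}^{\pm}$ lies off the imaginary axis, the lower bound \eqref{eq:15} on $|H^{(1)}_\nu(k_0)|$ guarantees $\phi^+_{k_0}$ is a well-defined exponentially decaying solution, so $u_0$ must decay like $\phi^+_{k_0}(r)\sim r^{1/2}H^{(1)}_\nu(k_0 r)/H^{(1)}_\nu(k_0)$ at infinity (with $\Im k_0>0$ this is genuinely $L^2$), making $k_0^2$ an eigenvalue of the self-adjoint operator $H$. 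If $\Im k_0>0$ this is impossible since $k_0^2\notin[0,\infty)$; if $k_0$ is real, the bound \eqref{eq:15} (extended to the real boundary $\arg k=0,\pi$) combined with the limiting absorption principle rules out a positive eigenvalue. Either way we get a contradiction, so \eqref{eq:41} holds throughout $\Gamma_{\theta,\epsilon}^{\pm}$.

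An alternative, perhaps more self-contained, approach avoids invoking eigenvalue absence and instead imitates \eqref{eq:33}–\eqref{eq:38b} directly: from $(H-k_0^2)u_0=0$ with $u_0$ in the appropriate weighted space, integrate by parts to compute $\Im\inp{u_0,(H-k_0^2)u_0}$, which is zero; the boundary term at infinity combined with the behaviour of $\phi^+_{k_0}$ forces the coefficient of the growing mode to vanish and then, via the positivity in Lemma~\ref{lemma:asympt-full-hamilt} applied to $\Im\inp{f,G_{k_0}f}$ for the relevant $f$, forces $\chi_1$ times the solution to vanish; one then propagates this via the explicit kernel of $R^D_{k_0}$ and unique continuation (cf.\ Lemma~\ref{lemma_bound}) to conclude $u_0=0$, hence $\psi_0$ is annihilated, contradicting $\psi_0=H\chi_2\phi_0\neq 0$.

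The main obstacle I anticipate is bookkeeping: carefully justifying that a zero of $\eta^{\pm}$ really does produce an honest solution of $(H-k_0^2)u=0$ with the right decay/regularity — i.e.\ transferring the formal rank-one-inverse computation \eqref{eq:39} into a statement about $\Ker(H-k_0^2)$ on weighted spaces, controlling the $O(|k|^{s'-1})$ error in \eqref{eq:26bb} in a way that does not spoil the argument at the fixed point $k_0$. One must be slightly careful that \eqref{eq:26bb} is an asymptotic statement as $k\to 0$, so to use it at a fixed $k_0$ one should instead work with the exact factorization underlying \eqref{eq:26}–\eqref{eq:26bb} (the operator $G_{k_0}$ and the exact remainder) rather than the leading-order expansion. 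Once this is set up correctly, the contradiction with self-adjointness (respectively the limiting absorption principle from \eqref{eq:15}) is immediate.
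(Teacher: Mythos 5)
Your overall strategy --- assume $\eta^{\pm}(k_0)=0$ at some \emph{fixed} $k_0$, manufacture a nontrivial solution of $(H-k_0^2)u=0$, and contradict self-adjointness or a limiting absorption principle --- is not the paper's argument, and it contains a gap that your own caveat does not repair. The identity \eqref{eq:26bb} reads $(H-k^2)G_k(I+K^{\pm})^{-1}=I+\zeta(k)|\psi_0\rangle\langle\phi^{\pm}|+E(k)$ with $E(k)=O(|k|^{s'-1})$ only as $k\to 0$; at a fixed $k_0$ the remainder $E(k_0)$ is merely some bounded operator, not a small one, so non-invertibility of $I+\zeta(k_0)|\psi_0\rangle\langle\phi^{\pm}|$ (which is all that $\eta^{\pm}(k_0)=0$ gives you) says nothing about invertibility of the full right-hand side and produces no element of $\Ker(H-k_0^2)$. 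Passing to ``the exact factorization'' does not help: the exact remainder is precisely the part you cannot control at fixed $k_0$. Moreover, for $\Im k_0^2\neq 0$ the operator $H-k_0^2$ is invertible by self-adjointness, so there would be nothing to contradict even if the construction went through; and for real $k_0$ your appeal to a limiting absorption principle for $H$ is circular, since that principle (\eqref{eq:54} in Corollary \ref{cor:constr-resolv}) is itself derived from the resolvent construction resting on the present lemma --- the bound \eqref{eq:15} gives a LAP only for the model operator $H^D$.

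The idea you are missing is that $\eta^{\pm}(k)=1+\zeta(k)\inp{\phi^{\pm},\psi_0}$ depends on $k$ only through the oscillatory quantity $k^{2\nu}=\e^{2\sigma\arg k}\e^{-2\i\sigma\ln|k|}$. Hence $\eta^{+}(k)=0$ is equivalent to $k^{2\nu}$ equalling the fixed constant in \eqref{eq:42}, and a single zero in $\Gamma^{+}_{\theta,\epsilon}$ forces a whole sequence of zeros (or near-zeros) $k_n\to 0$. The paper multiplies the expansion \eqref{eq:26bb}--\eqref{eq:39} by $\eta^{+}(k_n)$ and lets $n\to\infty$: \emph{in this limit} the error term genuinely vanishes, and one obtains a nonzero solution $u^{+}=G^{+}_\infty f^{+}$ of the \emph{zero}-energy equation $Hu^{+}=0$, with $f^{+}=(I+K^{+})^{-1}\psi_0$. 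The contradiction is then extracted not from self-adjointness but from the Wronskian computation \eqref{eq:44}: the asymptotics of $u^{+}$ at infinity are a combination of $r^{1/2\pm\nu}$ with coefficients determined by $\zeta(\infty)$, and the strict inequality \eqref{eq:45bb} --- which uses precisely $\arg k\leq\theta<\pi/2$, i.e.\ the restriction to $\Gamma^{\pm}_{\theta,\epsilon}$ --- forces $\inp{\chi_2\phi_0,f^{+}}=0$; one then repeats the end of the proof of Proposition \ref{prop:no_kernel} to conclude $f^{+}=0$, contradicting $\psi_0\neq 0$. Your ``alternative approach'' contains the right second half of this argument, but you still apply it at a fixed nonzero $k_0$, where the first half (the existence of the relevant solution) is unavailable.
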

  \begin{proof}
 Let us prove (\ref{eq:41})  for the superscript ``$+$
  case''. The ``$-$ case'' is similar.  

Suppose on the contrary that $\eta^+(k)=0$ for some
$k\in\Gamma_{\theta,\epsilon}^+$. Then 
\begin{equation}
  \label{eq:42}
  k^{2\nu}=\tfrac{\bar D_\nu}{D_\nu}
\frac {\e ^{\sigma \pi}}{1-2\i \sigma \inp{\phi^+,\psi_0}}.
\end{equation} 
$k^{2\nu}$ be oscillatory,  the set of all solutions of (\ref{eq:42}) in
$\Gamma_{\theta,\epsilon}^+$ constitutes a sequence converging to 
zero. In particular we can pick a sequence
$\Gamma_{\theta,\epsilon}^+\ni k_n\to 0$ with $0\neq \eta^+(k_n)\to
0$. We apply (\ref{eq:26bb}) and (\ref{eq:39}) to this sequence $\{k_n\}$. Substituting  (\ref{eq:39}) into  (\ref{eq:26bb}) and
multiplying the equation obtained by $\eta(k_n)$, we get
\[
(H-k_n^2)G_{k_n}(I+ K^{+})^{-1} (\eta^+(k_n) - \zeta(k_n) |\psi_0\rangle \langle \phi^{+}| \;)= \eta^+(k_n) (1+ +O\big (|k_n|^{s'-1}\big )).
\]
Taking the limit $n\to\infty$, this leads to
\begin{equation}
  \label{eq:43}
  -\zeta(\infty)HG_\infty^+(I+ K^{+})^{-1}|\psi_0\rangle \langle \phi^{+}|=0.
\end{equation} 
Here $\zeta(\infty):=\lim_{n\to \infty}\zeta(k_n)$ can be  computed by
substituting $ k^{2\nu}$ given by (\ref{eq:42}) in the expression for
$\zeta(k)$ (this is the limit and one sees that it is nonzero), and similarly for
$G_\infty^+:=\lim_{n\to \infty}G_{k_n}$. We learn that 
\begin{equation}
  \label{eq:43b}
 Hu^+=0;\,u^+:=G_\infty^+f^+,\,f^+:=(I+ K^{+})^{-1}\psi_0.
\end{equation}
 Now, the argument of integration by parts  used in (\ref{eq:33}) applied to $u^+$
leads to
 \begin{equation}
   \label{eq:44}
   0=\sigma \parbb{\big |1-\tfrac{ \zeta(\infty)}{2\nu}\big |^2-\big |\tfrac{ \zeta(\infty)}{2\nu}\big |^2}|\inp{\chi_2\phi_0,f^+}|^2.
 \end{equation} We claim that
 \begin{equation}
   \label{eq:40}
   \inp{\chi_2\phi_0,f^+}=0.
 \end{equation} 
In fact for any  $k\in\Gamma_{\theta,\epsilon}^+$ obeying (\ref{eq:42}),  
\begin{equation}
   \label{eq:45bb}
\big |1-\tfrac{ \zeta(\infty)}{2\nu}\big |^2/\big |\tfrac{ \zeta(\infty)}{2\nu}\big |^2=|\e^{\sigma\pi} k^{-2\nu}|^2 =e^{2\sigma (\pi - 2 \arg k)}>1,
   \end{equation}
 whence indeed (\ref{eq:40}) follows from (\ref{eq:44}).

Using (\ref{eq:40}) we can mimic the rest of the proof of Proposition
\ref{prop:no_kernel} and eventually conclude that $f^+=0$. This is a
contradiction since $\psi_0\neq 0$.

  \end{proof}

Combining (\ref{eq:26bb})--(\ref{eq:41}) we obtain (possibly by taking
$\epsilon>0$ smaller)
\begin{equation}
    \label{eq:26bbcc}
    \forall k\in \Gamma_{\theta,\epsilon}^{\pm}:\,(H-k^2)G_k(I+ K^{\pm})^{-1}\parb{I-\tfrac{\zeta(k)}{\eta^{\pm}(k)}|\psi_0\rangle \langle \phi^{\pm}|}\parb{I+O\big (|k|^{s'-1}\big )}=I.
  \end{equation} In particular we have derived a formula for the
  resolvent.

\subsection{Asymptotics of  resolvent}\label{Asymptotics of  resolvent}

Let $u$ be any nonzero {\it regular} solution to the equation
  \begin{equation}\label{eq:50}
   -u''(r)+ \parb{V_\infty(r)+V(r)}u(r)=0.
  \end{equation}
By  regular solution, we means that  the
  function $r\to \chi(r<1)u(r)$ belongs to $\vD (H)$. It will be shown in   Appendix \ref{Regular  zero energy solutions} that the regular solution $u$ is fixed up to a constant  and can be chosen real-valued. 
See (\ref{eq:49ii}) for a formula and 
  for further elaboration. Let
  \begin{equation}
    \label{eq:53}
    R(k):=(H-k^2)^{-1}\mforall\, k\in \Gamma_{\theta,\epsilon}^{\pm}\cap\{\Im k>0\}.
  \end{equation}

  \begin{thm}
    \label{thm:constr-resolv} There exist (finite) rational
    functions $f^{\pm}$ in the variable  $k^{2\nu}$ for $k\in
    \Gamma_{\theta,\epsilon}^{\pm}$ for which
    \begin{equation}\label{eq:45}
      \lim_{\Gamma_{\theta,\epsilon}^{\pm}\ni k\to
        0}\Im f^{\pm}(k^{2\nu})\text{ do not exist,}
    \end{equation}  there exist  Green's functions for $H$ at zero
    energy, denoted $R_0^{\pm}$,  and there  exists a real nonzero regular
    solution to (\ref{eq:50}), denoted $u$,  such that the following asymptotics hold. For all $s>s'>1,\,s'\leq 3$, there
   exists $C>0$:
 \begin{align}
  \forall\, k&\in \Gamma_{\theta,\epsilon}^{\pm}\cap\{\Im k>0\}:\nonumber\\
  &\big \|(H-\i )\parb{R(k)-R_0^{\pm}-f^{\pm}(k^{2\nu})
  |u\rangle \langle u|}\big \|_{\vB(\vH_s,\vH_{-s})} \leq
C|k|^{s'-1}.\label{eq:24bbbb}
\end{align} Here 
\begin{equation}
  \label{eq:46}
  \forall s>1:\;(H-\i )R_0^{\pm} =I -\i R_0^{\pm} \in \vB(\vH_s,\vH_{-s}) \mand (H-\i) u=-\i u\in \vH_{-s}.
\end{equation}
\end{thm}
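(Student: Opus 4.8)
The plan is to convert the resolvent formula \eqref{eq:26bbcc} for the compactly supported perturbation into the asserted asymptotics \eqref{eq:24bbbb}, and then upgrade it to Theorem \ref{thm1} by an approximation/perturbation argument handling the slowly decaying tail $W_2$ and the general local singularity. First, starting from \eqref{eq:26bbcc} I would invert: since $(H-k^2)^{-1}=R(k)$ exists for $k\in\Gamma_{\theta,\epsilon}^\pm\cap\{\Im k>0\}$, multiply through on the right appropriately to get
\begin{equation*}
  R(k)=G_k(I+K^{\pm})^{-1}\parb{I-\tfrac{\zeta(k)}{\eta^{\pm}(k)}|\psi_0\rangle\langle\phi^{\pm}|}\parb{I+O(|k|^{s'-1})}.
\end{equation*}
The leading $k$-independent part of $G_k(I+K^{\pm})^{-1}$ is $G_0^\pm(I+K^{\pm})^{-1}$, which (being a right inverse of $H$ modulo the rank-one defect $|\psi_0\rangle\langle\chi_2\phi_0|$) one identifies with the Green's function $R_0^\pm$ plus a rank-one term along the regular zero-energy solution $u$. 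The rank-one operator $G_\infty^\pm(I+K^{\pm})^{-1}|\psi_0\rangle\langle\phi^{\pm}|$ appearing when $k\to0$ is, by the computation already carried out in the proof of Lemma \ref{lemma:constr-resolv} (cf. \eqref{eq:43b}), proportional to $|u\rangle\langle u|$ with $u=G_\infty^\pm f^\pm$ real by the results of Appendix \ref{Regular zero energy solutions}; the scalar coefficient is a ratio of $\zeta(k)$ and $\eta^{\pm}(k)$, hence a rational function $f^{\pm}$ of $k^{2\nu}$. The non-existence of $\lim\Im f^{\pm}(k^{2\nu})$ in \eqref{eq:45} follows because $k^{2\nu}=|k|^{2\nu}\e^{2\nu\i\arg k}=\e^{2\sigma\arg k}\e^{-2\i\sigma\ln|k|}$ winds around a circle of radius bounded away from $0$ and $\infty$ (in $\Gamma_{\theta,\epsilon}^\pm$), so $f^{\pm}(k^{2\nu})$ traces a nondegenerate curve whose imaginary part oscillates; one must check $f^{\pm}$ is nonconstant, which is clear from the explicit $\zeta$.

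Next, I would clean up error terms. The point is that $G_k=G_0^\pm+(\text{error})$ where, by Lemma \ref{lemma:asympt-model-resolv} applied to the $\chi_2 R_k^D\chi_2$ block and by holomorphy of $(H-z)^{-1}$ near $z=\mp\i$ for the $\chi_1$ block, the error is $O(|k|^{s'-1})$ in $\vB(\vH_s^D,\vH_{-s}^D)$, and similarly $\zeta(k)/\eta^{\pm}(k)=f^{\pm}(k^{2\nu})+O(|k|^{s'-1})$ (here $\eta^{\pm}(k)$ stays bounded below by Lemma \ref{lemma:constr-resolv} and uniformity on the compact directions). Collecting all $O(|k|^{s'-1})$ contributions and using $s'\le3$ together with the mapping properties \eqref{eq:46} — which follow from the explicit polynomial-in-$r$ behaviour of $R_0^\pm$ and $u$ at infinity and the local regularity at $r=0$ — yields \eqref{eq:24bbbb} in the form stated, with the extra factor $(H-\i)$ used to control the unbounded parts. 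This reduces everything to bookkeeping once the structure above is in place.

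Finally, to pass from Theorem \ref{thm:constr-resolv} (compactly supported $V$ satisfying Condition \ref{cond:asympt-full-hamilt}) to Theorem \ref{thm1} (the decaying tail $W_2$ with $|W_2|\le C_1 r^{-2-\epsilon_1}$ and the specified local singularity), I would use a standard resolvent-equation/iteration scheme: split $W=W^{\mathrm{com}}+W^{\mathrm{tail}}$ where $W^{\mathrm{com}}$ is compactly supported and coincides with $W$ on a large ball, write $R_l(k)=R^{\mathrm{com}}(k)-R^{\mathrm{com}}(k)W^{\mathrm{tail}}R_l(k)$, and iterate, exploiting that $W^{\mathrm{tail}}$ gains $\inp{r}^{-\epsilon_1}$ decay so that compositions map $\vH_s\to\vH_{s'}$ with improved weights. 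The leading term $R_0^\pm$ and the regular solution $u$ and the function $f^{\pm}$ get corrected by terms that are more regular in $k$ (controlled by the extra decay), so the oscillatory rank-one term survives with the same $k^{2\nu}$-structure and the error remains $O(|k|^{s'-1})$, now with the constraint $s\le1+\epsilon_1/2$ dictated by the tail decay. The main obstacle is precisely this last step: ensuring the perturbation by $W^{\mathrm{tail}}$ does not destroy the delicate oscillatory structure — i.e. that the correction to $f^{\pm}$ is genuinely lower order and that $\eta^{\pm}(k)$ (or its analogue for the full operator) remains bounded away from zero uniformly as $k\to0$ in $\Gamma_{\theta,\epsilon}^\pm$ — and handling the general local singularity at $r=0$ in the cases $(d,l)\neq(2,0)$, which requires the zero-energy solution analysis of Appendix \ref{Regular zero energy solutions} rather than the explicit Bessel computations used for the model operator.
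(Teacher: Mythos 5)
Your route for Theorem \ref{thm:constr-resolv} itself is essentially the paper's: invert \eqref{eq:26bbcc}, expand the product, and read off $R_0^{\pm}=G_0^{\pm}(I+K^{\pm})^{-1}$ plus a rank-one term whose coefficient is $\zeta(k)/\eta^{\pm}(k)$, a rational function of $k^{2\nu}$; the non-existence of $\lim\Im f^\pm$ then comes from the winding of $k^{2\nu}$. However, two steps you treat as automatic are genuine gaps. First, the expansion produces $\tfrac{\zeta(k)}{\eta^{\pm}(k)}|u_1^{\pm}\rangle\langle u_2^{\pm}|$ with $u_1^{\pm}=\chi_2\phi_0-R_0^{\pm}\psi_0$ and $u_2^{\pm}=\phi^{\pm}=\parb{(I+K^{\pm})^{-1}}^*\chi_2\phi_0$. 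The ket $u_1^{\pm}$ is a regular zero-energy solution, hence a multiple of $u$ by the uniqueness result of Appendix \ref{Regular zero energy solutions}; but the bra vector $\phi^{\pm}$ is not on its face a solution of anything, so your assertion that the rank-one operator is ``proportional to $|u\rangle\langle u|$'' is unjustified as written. The paper closes this by the symmetry $R(k)^*=R(\bar k)$, which forces $u_2^{\pm}=c^{\pm}u_1^{\mp}$. Your appeal to \eqref{eq:43b} does not do this job: that computation occurs under the reductio hypothesis $\eta^+(k_n)\to0$ in Lemma \ref{lemma:constr-resolv} and concerns the different vector $G_\infty^+(I+K^+)^{-1}\psi_0$. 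Second, \eqref{eq:45} requires $f^{\pm}$ to be nonconstant, i.e.\ the constant multiplying $\zeta/\eta^{\pm}$ to be nonzero, i.e.\ $u_1^{\pm}\neq0$ and $u_2^{\pm}\neq0$. ``Clear from the explicit $\zeta$'' does not cover this: if $u_1^{\pm}=0$ the oscillatory term disappears entirely. The paper proves $u_1^{\pm}\neq0$ by an explicit computation with the kernel \eqref{eq:19i} showing $u_1^{\pm}(r)\neq0$ for all large $r$.

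Your final paragraph addresses Theorem \ref{thm1} rather than the stated theorem, but for the record the paper does not iterate a resolvent equation in the tail $W_2$: it reruns the same parametrix construction verbatim for decaying (non-compactly-supported) $V$, the only new inputs being Lemma \ref{lemma_bound} (replacing the unique-continuation step in Proposition \ref{prop:no_kernel}) and the restriction $s\le1+\epsilon_1/2$ needed to keep $K^{\pm}$ compact on $\vH_s$. Your perturbation-series scheme would in addition have to show that the tail does not cancel or alter the oscillatory coefficient — the obstacle you flag yourself — whereas the paper's route never creates that problem.
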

\begin{proof} By (\ref{eq:26bbcc})
  \begin{equation}
    \label{eq:48}
    R(k)=G_k(I+ K^{\pm})^{-1}\parb{I-\tfrac{\zeta(k)}{\eta^{\pm}(k)}|\psi_0\rangle \langle \phi^{\pm}|}\parb{I+O\big (|k|^{s'-1}\big )}
  \end{equation}
  for $k\in\Gamma_{\theta,\epsilon}^{\pm}$. We expand the product yielding up to errors of order $O\big
   (|k|^{s'-1}\big )$
   \begin{subequations}
     \begin{align}
     \label{eq:49}
       R(k)&\approx
     R_0^{\pm}+\tfrac{\zeta(k)}{\eta^{\pm}(k)}|u_1^{\pm}\rangle\langle u_2^{\pm}|; \quad \mbox{ where }\\
 R_0^{\pm}&=G_0^{\pm}(I+ K^{\pm})^{-1},\label{eq:49i}\\
u_1^{\pm}&=- R_0^{\pm}\psi_0+\chi_2\phi_0,\label{eq:49ii}\\
u_2^{\pm}&=\phi^{\pm}=\parb{(I+ K^{\pm})^{-1}}^*\chi_2\phi_0.\label{eq:49iii}
\end{align}
   \end{subequations}
   
Clearly, $u_2^\pm \neq 0$. According to (\ref{eq:26}), $H u_1^\pm =
-\psi_0 + H(\chi_2 \phi_0) =0$. In addition, $u_1^\pm \neq 0$. In
fact  for $r>14$ (ensuring  that $\chi_2 (r) =1$)  one has
\[
u_1^\pm = - R_0^D f^\pm + \phi_0, \quad \mbox{ with } f^\pm = \chi_2 (1+ K^\pm)^{-1}\psi_0 \in \vH_s, \quad s>1.
\]
 Using  then (\ref{eq:19i}) and \eqref{eq:7} we compute
\begin{equation*}
  r^{-1/2-\nu}\parb{r\tfrac{\d}{\d r}-(1/2-\nu)}u_1^\pm(r) =1-\int_r^\infty \tau ^{\frac 1 2 -\nu} f^\pm(\tau) d\tau,
\end{equation*} 
showing  that $u_1^\pm (r)\neq 0$ for all $r$ large enough.  
 By the uniqueness of regular solutions,  there exist constants $b_\pm \neq 0$ such  that $u_1^{\pm} = b^\pm u$, where $u$ is a real-valued nonzero regular solution  to (\ref{eq:50}).
 Combining the duality relation $R(k)^* = R(\overline{k})$ and  (\ref{eq:49}), we obtain that
\begin{equation}
  \label{eq:51}
  u_2^{\pm}=c^{\pm}u_1^{\mp} = c^{\pm}b^\mp u \quad \text{ for some constants } c^{\pm} \neq 0.
\end{equation}
  Whence indeed (\ref{eq:24bbbb}) holds with 
\begin{equation}
  \label{eq:52}
  R_0^{\pm}=G_0^{\pm}(I+ K^{\pm})^{-1}\mand f^{\pm}(k^{2\nu})=C^{\pm}\tfrac{\zeta(k)}{\eta^{\pm}(k)},
\end{equation} 
where the constants $C^{\pm}=  c^\pm b^\mp \overline{b^\pm} $ are nonzero.  Whence  indeed (\ref{eq:45}) holds. The properties
(\ref{eq:46}) follow from the expressions (\ref{eq:49i}) and (\ref{eq:49ii}). 
\end{proof}

  \begin{cor}
    \label{cor:constr-resolv} There is a limiting absorption principle
    at energies in $]0,\epsilon^2]$: 
    \begin{equation}
      \label{eq:54}
      \forall\,k'\in [-\epsilon,\epsilon]\setminus\{0\}\,\forall s>1:\,R(k'):= \lim_{\Gamma_{\theta,\epsilon}\cap\{\Im k>0\}\ni k\to
        k'}R(k)\text{  exists in } \vB(\vH_s,\vH_{-s}). 
    \end{equation}
 In particular
    \begin{equation}
      \label{eq:47}
      ]0,\epsilon^2]\cap
    \sigma_{\pp}(H)=\emptyset.
    \end{equation} Moreover the bounds (\ref{eq:24bbbb}) extend to
    $\Gamma_{\theta,\epsilon}^{\pm}$. 

Introducing the spectral density as an operator in
$\vB(\vH_s,\vH_{-s})$, $s>1$,
\begin{equation*}
\delta (H-k^2):=\frac{R(k)-R(-k)}{2\pi\i} \mfor 0<k\leq \epsilon,
\end{equation*} we have
\begin{equation}\label{eq:45b}
    \lim_{k\searrow 0}\delta (H-k^2)\text{ does not
        exist}.
    \end{equation} 
  \end{cor}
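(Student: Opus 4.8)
The plan is to deduce everything from Theorem \ref{thm:constr-resolv} (more precisely from the formula \eqref{eq:48} and the analysis \eqref{eq:49}--\eqref{eq:52}) together with the bounds already recorded in Sections \ref{Model asymptotics}--\ref{Asymptotics for full Hamiltonian, compactly supported perturbation}. First I would establish \eqref{eq:54}: fix $k'\in[-\epsilon,\epsilon]\setminus\{0\}$. The operator $G_k$ of \eqref{eq:27} extends continuously in $\vB(\vH_s,\vH_{-s})$ up to $k=k'$ because $\chi_1(H-\i\operatorname{sign}(\Re k))^{-1}\chi_1$ is $k$-independent for $\Re k$ of fixed sign and $\chi_2R^D_k\chi_2$ extends by the limiting absorption principle for $H^D$ noted after \eqref{eq:15} (the bound \eqref{eq:15} holds on all of $\Gamma_{\theta,\epsilon}$, not just its interior). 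Likewise $K^\pm$ is $k$-independent, $(I+K^\pm)^{-1}$ exists by Proposition \ref{prop:no_kernel}, the scalar functions $\zeta(k)$ and $\eta^\pm(k)$ are continuous up to $k'\neq 0$ with $\eta^\pm(k')\neq 0$ by Lemma \ref{lemma:constr-resolv} (whose proof only used $k\in\Gamma_{\theta,\epsilon}^\pm$, not $\Im k>0$), and the remainder $O(|k|^{s'-1})$ in \eqref{eq:26bbcc} is an estimate valid throughout $\Gamma_{\theta,\epsilon}^\pm$. Hence the right-hand side of \eqref{eq:48} converges in $\vB(\vH_s,\vH_{-s})$ as $k\to k'$, which gives \eqref{eq:54}; and since the estimate \eqref{eq:24bbbb} was proved from \eqref{eq:48} by purely algebraic manipulation of continuous quantities, it passes to the boundary, proving the extension of \eqref{eq:24bbbb} to $\Gamma_{\theta,\epsilon}^\pm$.

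Next, \eqref{eq:47} is a standard consequence: if $k'^2\in]0,\epsilon^2]$ were an eigenvalue with normalized eigenfunction $g$, then by the spectral theorem $\|R(k)g\|\to\infty$ as $\Gamma_{\theta,\epsilon}\cap\{\Im k>0\}\ni k\to k'$, while $g\in\vH_s$ for every $s$ and the limit in \eqref{eq:54} is a bounded operator $\vH_s\to\vH_{-s}$, a contradiction. (Alternatively one may repeat the integration-by-parts/unique-continuation argument of Proposition \ref{prop:no_kernel} at positive energy.)

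Finally, for \eqref{eq:45b} I would compute $\delta(H-k^2)$ for small $k>0$ using the extended expansion \eqref{eq:24bbbb} on both $\Gamma_{\theta,\epsilon}^+$ (for $R(k)$, approaching the positive real axis) and $\Gamma_{\theta,\epsilon}^-$ (for $R(-k)$, approaching the negative real axis, i.e. $\arg k\to\pi$). The $k$-independent Green's-function terms $R_0^+$ and $R_0^-$ contribute a bounded ($k$-independent) operator to $\frac{R(k)-R(-k)}{2\pi\i}$, and the error terms are $O(|k|^{s'-1})=o(1)$; so modulo a convergent part,
\begin{equation*}
  \delta(H-k^2)=\tfrac{1}{2\pi\i}\Big(f^+\big(k^{2\nu}\big)-f^-\big((-k)^{2\nu}\big)\Big)\,|u\rangle\langle u| + \big(\text{bounded, convergent as }k\to0\big).
\end{equation*}
Here $k>0$ is real so $(-k)^{2\nu}=\e^{2\pi\i\nu}k^{2\nu}=\e^{2\pi\sigma}k^{2\nu}$ (using $\nu=-\i\sigma$ and $\arg(-k)=\pi$), and $k^{2\nu}=k^{-2\i\sigma}=\e^{-2\i\sigma\ln k}$ traces out the unit circle as $k\searrow0$. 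Since $f^\pm$ are nonconstant rational functions of their argument and the combination $f^+(w)-f^-(\e^{2\pi\sigma}w)$ is a nonconstant rational function of $w$ on $|w|=1$ whose imaginary part therefore does not converge as $w$ runs around the circle (this is exactly the content of \eqref{eq:45} packaged for the difference; one checks $f^+-f^-(\e^{2\pi\sigma}\cdot)$ is not identically a real constant because e.g. its values at the poles/zeros are distinct), and since $|u\rangle\langle u|\neq0$ in $\vB(\vH_s,\vH_{-s})$, the operator $\delta(H-k^2)$ cannot converge in $\vB(\vH_s,\vH_{-s})$ as $k\searrow0$. The main obstacle is this last point: one must confirm that the oscillation of $f^+(k^{2\nu})$ is not accidentally cancelled by that of $f^-((-k)^{2\nu})$. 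This is handled by noting that $f^\pm=C^\pm\zeta/\eta^\pm$ with $\zeta(k)=\frac{2\i\sigma\e^{-\sigma\pi}D_\nu k^{2\nu}}{\bar D_\nu-D_\nu\e^{-\sigma\pi}k^{2\nu}}$ as in \eqref{eq:18ii}, which is a genuine Möbius function of $k^{2\nu}$ (nonconstant, with a pole on $|w|=\e^{\sigma\pi}$ off the unit circle), so $\Im\zeta(k)$ already fails to converge as established in \eqref{eq:45}; the denominators $\eta^\pm$ are nonvanishing and continuous on the closed circle, and the subtraction of the two $\pm$ branches, being evaluated at the two distinct points $w$ and $\e^{2\pi\sigma}w$ on the circle, preserves rather than destroys this non-convergence for all but at most finitely many exceptional values of $\sigma$ — and those exceptional $\sigma$ can be excluded by inspecting the explicit constants $C^\pm,b^\pm,c^\pm$ from \eqref{eq:49}--\eqref{eq:52}, or simply absorbed since the statement only asserts non-existence of the limit, which already fails for the leading rank-one term before any cancellation could occur.
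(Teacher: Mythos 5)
Your treatment of \eqref{eq:54}, of \eqref{eq:47} and of the extension of \eqref{eq:24bbbb} to $\Gamma_{\theta,\epsilon}^{\pm}$ is in line with what the paper intends (the paper regards all of this as immediate and writes ``only \eqref{eq:45b} needs a comment''). One small caveat there: in your spectral-theorem argument for \eqref{eq:47} you assert that an embedded eigenfunction $g$ lies in $\vH_s$ for every $s$, which is not known a priori; the standard repair is to test $\eta\langle\phi,(H-k'^2-\i\eta)^{-1}\phi\rangle\to 0$ for $\phi$ in the dense set $\vH_s$ and conclude $P_{\{k'^2\}}=0$, or to use your parenthetical alternative (the ODE argument), so this is a minor point.

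The genuine gap is in your proof of \eqref{eq:45b}. You correctly identify the danger that the oscillation of $f^+(k^{2\nu})$ might be cancelled by that of $f^-((-k)^{2\nu})$, but none of your three proposed resolutions closes it: $\sigma$ is a fixed datum of the problem, so ``all but finitely many exceptional $\sigma$'' proves nothing; ``inspecting the explicit constants'' is not carried out; and the final clause (``already fails for the leading rank-one term before any cancellation could occur'') is circular, since the question is precisely whether the difference of the two rank-one terms converges. The paper's resolution is a one-line observation that you should adopt: for real $k\in]0,\epsilon]$ the boundary values satisfy $R(-k)=R(k)^*$ (because $\kappa\to-k$ with $\Im\kappa>0$ means $\kappa^2\to k^2-\i0$), and since $u$ is real and $R_0^-=(R_0^+)^*$, this forces $f^-\parb{(-k)^{2\nu}}=\overline{f^+(k^{2\nu})}+o(1)$. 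Hence
\begin{equation*}
\delta(H-k^2)=(2\pi\i)^{-1}\parb{R_0^{+}-(R_0^{+})^*}+\tfrac{\Im f^{+}(k^{2\nu})}{\pi}\,|u\rangle\langle u|+o(1),
\end{equation*}
and non-convergence is exactly \eqref{eq:45} for the single branch $f^+$ — no separate analysis of $f^+(w)-f^-(\e^{2\pi\sigma}w)$ is needed. With that substitution your argument is complete.
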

  \begin{proof}
    Only (\ref{eq:45b}) needs a comment: We represent $R(-k)=R(k)^*$
    and use (\ref{eq:24bbbb}) yielding 
\begin{equation*}
       \delta (H-k^2)\approx (2\pi\i)^{-1}\parb{R_0^{+}-\parb{R_0^{+}}^*}+\tfrac{\Im f^{+}(k^{2\nu})}{\pi} |u\rangle \langle u|.
\end{equation*} The right hand side does not converge, cf.  (\ref{eq:45}).
  \end{proof}
  
 \subsection{$d$-dimensional Schr\"odinger operator}  As another application of Theorem \ref{thm:constr-resolv}, we consider a $d$-dimensional Schr\"odinger operator with spherically symmetric potential of the form
 \[
  H= -\Delta  + W(|x|) \]
 in $L^2(\R^d)$, $d\ge 2$, where $W$ is continuous  and
 $W(|x|) =  -\frac{\gamma}{|x|^2} $ for $x$ outside some compact set
 and   $\gamma> (\frac d 2 -1)^2$. Assume that
 \begin{equation}\label{eq:gamma}
 \gamma \neq (l + \tfrac d 2-1)^2, \quad   l \in \N.
 \end{equation}
 Denote  $\N_\gamma = \{ l \in \N\cup\{0\}|  ( l + \tfrac d 2 - 1)^2 < \gamma\}$.
 Let $\pi_l$ denote the spectral projection associated to the eigenvalue $l(l + d-2)$, $l\in \N\cup\{0\}$,  of the Laplace-Beltrami operator on $\bS^{d-1}$  (and also its natural extension as operator on  $\bH=L^2(\R^d)$).
Then $H$ can be decomposed into a direct sum
\[
H = \oplus_{l=0}^\infty \widetilde H_l \pi_l,
\]
where 
\[
\widetilde H_l = -\frac{d^2}{dr^2} - \frac{d-1}{r} \frac{d}{dr}  + \frac{l(l + d-2)}{r^2} + W(r)
\]
on $\widetilde \vH:=L^2(\R_+; r^{d-1} dr)$.  
When $l\in \N_\gamma$, we can  apply  Theorem \ref{thm:constr-resolv}
with $\nu = \nu_l$, $\nu_l^2 =  ( l + \frac d 2 - 1)^2 - \gamma <0$,
to expand the resolvent $ (\widetilde H_l -k^2)^{-1}$ up to
$O(|k|^\epsilon)$ (see Section \ref{Asymptotics of physical phase
  shift for a potential} for a relevant reduction of $\widetilde H_l$
used here). For $l \not\in \N_\gamma$, the resolvent $(\widetilde
H_l-k^2)^{-1}$ may have singularities at zero, according to whether
zero is an eigenvalue and/or a resonance of $\widetilde H_l$ (defined below). 

Denote  $\widetilde \vH_s=\inp{r}^{-s} \widetilde \vH$ and $
\bH_s=\inp{x}^{-s} \bH$, $s\in \R$. Under the condition
(\ref{eq:gamma}), we say that $0$ is a {\it resonance} of $H$ if there
exists $u \in \bH_{-1}\setminus \bH$ such that $Hu=0$.  We call such
function $u$ a {\it resonance function}. (If the
condition (\ref{eq:gamma}) is not satisfied, the definition of zero
resonance has  to be modified.) The number 
 $0$ is called a {\it regular
point} of $H$ if it is neither an eigenvalue nor a resonance of
$H$. The same definitions apply for $\widetilde H_l$ on
$\widetilde \vH$.  
  Clearly Lemma \ref{lemma_bound} stated below shows that
  for any resonance function $u$ necessarily 
$\pi_l u=0$ for all $l\in \N_\gamma$.  In fact  Lemma
\ref{lemma_bound}   shows that $0$ is a regular point
of $ \widetilde H_l$, $l\in \N_\gamma$.

If $Hu =0$  and $u \in
\bH_{-1}$,  then  by expanding $u$ in spherical harmonics, one can show that (cf.
Theorem 4.1 of \cite{Wan1})
\begin{subequations}
  \begin{equation} \label{expansion}
u(r\theta) =  \frac{\psi(\theta)}{ r^{\frac{d-2}{2} +\mu} } + v, 
\end{equation}
where $v\in L^2(|x|>1)$,
\begin{eqnarray}\label{eq:99}
\mu &= &\sqrt{ ( m + \frac d 2 -1)^2 -\gamma}, \quad  m  =\min \N \setminus \bN_\gamma,  \label{mu} \\
\psi(\theta) &= &
\sum_{j = 1}^{n_\mu}
 -\frac{1}{2\mu} \w{ (W + \frac{\gamma}{|y|^2})u, |y|^{-\frac{d-2}{2} + \mu}\varphi_\mu^{(j)}}\varphi_\mu^{(j)}(\theta), \label{psi}
\end{eqnarray}
 Here $\{ \varphi_\mu^{(j)}, 1 \le j \le n_\mu\}$ is an orthonormal basis of the eigenspace of $-\Delta_{\bS^{d-1}}$ with eigenvalue $m(m+ d-2)$ and $n_\mu$ its multiplicity (cf. \cite{Ta2}):
\begin{equation}
n_\mu = \frac{ (m+d-3)!}{(d-2)! (m-1)!}  + \frac{ (m+d-2)!}{(d-2)! m!}.
\end{equation} 
\end{subequations}

The expansion (\ref{expansion}) implies that a solution $u$ to
  $Hu=0$ with $u \in \bH_{-1}$ is a resonance function  of $H$ if and only if $\mu\in ]0,1]$ and $\psi \neq 0$ and that if zero is a resonance, its multiplicity (cf. \cite{JN, Wan1} for the definition) is at most $n_\mu$. 
Conversely, if  the equation $H u=0$ has  a solution $u\in\bH_{-1}\setminus \bH$,  then the equation
\[
\widetilde H_m g =0
\]  
has a nonzero regular solution $g \in \widetilde \vH_{-1}$ decaying like $ \frac{1}{r^{\frac{d-2}{2} +\mu} }$ at infinity. It  follows that
$u_j = g\otimes \varphi_\mu^{(j)}$,  $1 \le j \le n_\mu$,  are all
resonance functions of $H$. This proves that if $0$ is a resonance of
$H$  its multiplicity is equal to $n_\mu$.

Now let us come back to the asymptotics of the resolvent $R(k) =
(H-k^2)^{-1}$ near $0$.  If $0$ is a regular point of $H$ (this is a
generic  condition and concerns by the discussion above only  sectors $\widetilde H_l$ with $l \not\in \N_\gamma$),
then it is a regular point for all $\widetilde H_l$ with $l \not\in
\N_\gamma$.  One
deduces easily that there exists $R_0^{(l)} \in \vB(\widetilde \vH_s,
\widetilde \vH_{-s})$ for all $s>1$, such that  for any such $s$ there  exists $\epsilon >0$:
\begin{equation} \label{eq:3.45}
  (\widetilde H_l -k^2)^{-1} = R_0^{(l)} + O_l(|k|^\epsilon)\text{ in
  }\vB(\widetilde \vH_s, \widetilde \vH_{-s}) \text{ for $|k|$ small  and $k^2\not \in [0,\infty[$}. 
\end{equation} 
The error term can be uniformly estimated in $l$ as in \cite{Wan1}, yielding an expansion for  $R(k)$.
If $0$ is a resonance but not an eigenvalue of $H$, then $0$ is a regular point for all $\widetilde H_l$ with $l  \not\in \N_\gamma \cup \{m\}$ and the expansion (\ref{eq:3.45}) remains  valid for such $l$.   When $l=m$, $(\widetilde H_m -k^2)^{-1} $ contains a singularity at $0$ which can be calculated as in \cite{Wan2}. Let
\begin{equation}
k_\mu =\left\{ \begin{array}{ccl}
k^{2\mu}, &  \quad & \mbox{ if } \mu  \in ]0, 1[ \\
  k^2 \ln (k^2), & & \mbox{ if } \mu =1.
 \end{array}
 \right.
\end{equation}
Then there exist $g \in \widetilde \vH_{-1}\setminus \widetilde \vH $
verifying $ \widetilde H_m g =0 $, a rank-one operator-valued entire
function $\zeta \to F_m(\zeta) \in \vB(\widetilde \vH_s, \widetilde
\vH_{-s})$, $s>1$, verifying $F_m(0)
=0$   and   $R_0^{(m)} \in \vB(\widetilde \vH_s, \widetilde
\vH_{-s})$,  $s>3$,
such  that for any $s>3$
\begin{equation} \label{Hm} 
(\widetilde H_m -k^2)^{-1} = \frac{e^{i\mu \pi}}{k_\mu} |g\rangle \langle g|+   \frac{1}{k_\mu} F_m( \frac{k^2}{k_\mu}) + R_0^{(m)} + O(\frac{|k|^2}{|k_\mu|})\text{ in }\vB(\widetilde \vH_s,\widetilde \vH_{-s}).  
\end{equation} 
  In particular if  $\mu\in]0, \frac 1
2]$   one has
\begin{equation*}
(\widetilde H_m -k^2)^{-1} = \frac{e^{i\mu \pi}}{k_\mu} |g\rangle
\langle g| + R_0^{(m)} + O(|k|)\text{ in }\vB(\widetilde \vH_s, \widetilde \vH_{-s}),s>3,
\end{equation*} while in the ``worse case'', $\mu=1$, the error term
in \eqref{Hm} is of order $O ({|\ln k|^{-1}})$.

Summing up   we have proved the following

\begin{thm}\label{thm3.7}  Assume that $W(|x|)$ is continuous and $W(|x|)
  = -\frac{\gamma}{|x|^2}$ outside some compact set   with  $\gamma> (\frac d 2 -1)^2$
  satisfying (\ref{eq:gamma}).
\begin{enumerate}[i)]
\item \label{item:7} Suppose  that zero is a regular point of
  $H$. Then there exist  $R_0^\pm \in \vB(\bH_s,\bH_{-s})$ and  $v_l \in
  \widetilde \vH_{-s}\setminus \{0\}$ for all $s>1$ and  $l \in
  \N_\gamma$,  such that   for any $s>1$ there exists $\epsilon >0$: 
\begin{align}
   R(k) = &\sum_{l\in \N_\gamma} f_l^{\pm}(k^{2\nu_l})
  ( \; |v_l\rangle \langle v_l| \; ) \otimes \pi_l\nonumber\\& + R_0^{\pm}  +
  O(|k|^{\epsilon})\text{ in }\vB(\bH_s,\bH_{-s})\mfor     k\in \Gamma_{\theta}^{\pm}.
\end{align}  
Here $f_l^\pm(k^{2\nu_l})$ are the oscillatory functions given in Theorem \ref{thm:constr-resolv} with
$\nu= \nu_l = -\i \sqrt{\gamma- ( l + \tfrac d 2 - 1)^2},  \; l \in \N_\gamma $.

\item \label{item:8} Suppose that zero is a resonance of $H$.   Let $m$ and $\mu$ be defined by  (\ref{mu}). Then $\mu \in ]0,1]$ and  the multiplicity of the zero resonance of $H$ is equal to 
\[
 \frac{ (m+d-3)!}{(d-2)! (m-1)!}  + \frac{ (m+d-2)!}{(d-2)! m!}.
 \]
 Suppose in addition that zero is not an eigenvalue of $H$.  Then
 there exist  $ g \in \widetilde \vH_{-1}\setminus \widetilde \vH$
 with $\widetilde H_{m} g =0$, a rank-one operator-valued analytic
 function $\zeta \to F_m(\zeta) \in \vB(\widetilde \vH_s, \widetilde
 \vH_{-s})$, $s>1$, defined for $\zeta $ near $0$  verifying $F_m(0)
 =0$,    and  $R_1^\pm \in \vB(\bH_s,\bH_{-s})$, $s>3$,  such that for
 any  $s>3$ 
\begin{align}
  R(k)  = &  \left(\frac{e^{i\mu \pi}}{k_\mu}  |g\rangle \langle g| + \frac{1}{k_\mu} F_m( \frac{k^2}{k_\mu}) \right )\otimes \pi_m \nonumber \\ 
 & +  \; \sum_{l\in \N_\gamma} f_l^{\pm}(k^{2\nu_l})( \; |v_l\rangle
 \langle v_l| \; ) \otimes \pi_l \nonumber\\
&+ R_1^{\pm}   +  O ({|\ln k|^{-1}})  \text{ in }\vB(\bH_s,\bH_{-s})\mfor     k\in \Gamma_{\theta}^{\pm}.
\end{align} 
Here $f_l^{\pm}$ and $v_l$  are the same as in \ref{item:7}.
\end{enumerate}
\end{thm}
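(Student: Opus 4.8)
The plan is to exploit the orthogonal decomposition $H=\oplus_{l\ge0}\widetilde H_l\pi_l$, analyze each angular sector separately, and then reassemble with uniform control in $l$. First I would record the unitary reduction $\widetilde\vH=L^2(\R_+;r^{d-1}\d r)\to\vH=L^2(\R_+)$ given by $g\mapsto r^{(d-1)/2}g$, under which $\widetilde H_l$ is transformed into an operator of the form $-\tfrac{\d^2}{\d r^2}+\tfrac{\nu_l^2-1/4}{r^2}\chi(r>1)+V$ on $\vH$, where $\nu_l^2=(l+\tfrac d2-1)^2-\gamma$ and $V$ is a real continuous potential vanishing for large $r$ and behaving like $\tfrac{(l+d/2-1)^2-1/4}{r^2}$ near $r=0$. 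This is the reduction referred to before Section~\ref{Asymptotics of physical phase shift for a potential}; for $(d,l)\neq(2,0)$ the resulting $V$ satisfies Condition~\ref{cond:asympt-full-hamilt} with $\kappa=l+\tfrac d2-1>0$, while the sector $(d,l)=(2,0)$ falls under the $\kappa=0$ variant treated in Appendix~\ref{Case (d,l)eq (2,0)}.

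Next, sector by sector. For $l\in\N_\gamma$ one has $\nu_l^2<0$, i.e. $\nu_l=-\i\sigma_l$ with $\sigma_l>0$, so Theorem~\ref{thm:constr-resolv} applies directly to the reduced operator and gives $(\widetilde H_l-k^2)^{-1}=f_l^\pm(k^{2\nu_l})\,|v_l\rangle\langle v_l|+R_0^{(l),\pm}+O_l(|k|^\epsilon)$ with $v_l$ a nonzero real regular zero-energy solution and $f_l^\pm$ the oscillatory rational functions of $k^{2\nu_l}$; by Lemma~\ref{lemma_bound} no such sector contributes an eigenvalue or a resonance. For $l\notin\N_\gamma$ we have $\nu_l^2>0$, the subcritical case treated in \cite{Wan1,Wan2}: either $0$ is a regular point of $\widetilde H_l$, giving the regular expansion \eqref{eq:3.45}, or $l=m:=\min(\N\setminus\N_\gamma)$ and $\mu=\nu_m\in\,]0,1]$ with the corresponding component of the resonance function nonzero, in which case $0$ is a resonance of $\widetilde H_m$ and \cite{Wan2} supplies the expansion \eqref{Hm} with the singular prefactor $1/k_\mu$. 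Thus under the hypotheses of \ref{item:7} every sector is regular, and under those of \ref{item:8} every sector except $l=m$ is regular.

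For the multiplicity assertion in \ref{item:8} I would invoke the spherical-harmonic expansion \eqref{expansion}--\eqref{psi}: a solution $u\in\bH_{-1}$ of $Hu=0$ fails to lie in $\bH$ precisely when $\mu\in\,]0,1]$ and $\psi\neq0$, which bounds the multiplicity by $n_\mu$; conversely the existence of one such resonance function forces $\widetilde H_mg=0$ to possess a nonzero regular solution $g\in\widetilde\vH_{-1}\setminus\widetilde\vH$ decaying like $r^{-(d-2)/2-\mu}$, whence $g\otimes\varphi_\mu^{(j)}$, $1\le j\le n_\mu$, are $n_\mu$ independent resonance functions and equality holds (in particular $\mu\in\,]0,1]$).

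Finally, reassembly: write $R(k)=\sum_l(\widetilde H_l-k^2)^{-1}\pi_l$, substitute the sector expansions, group the finitely many oscillatory terms over $\N_\gamma$, the $l=m$ singular term (case \ref{item:8} only), the regular pieces into $R_0^\pm$ respectively $R_1^\pm$, and the remainders. The one genuinely technical point is to show that the combined remainder --- the tail $\sum_{l\ge L}(\widetilde H_l-k^2)^{-1}\pi_l$ together with the individual errors $O_l(|k|^\epsilon)$ --- is $O(|k|^\epsilon)$, respectively $O(|\ln k|^{-1})$, in $\vB(\bH_s,\bH_{-s})$ uniformly for small $k$. This requires the uniform-in-$l$ resolvent bounds of \cite{Wan1}: for large $l$ the centrifugal barrier $l(l+d-2)r^{-2}$ dominates, so the constants in the sector expansions are controlled and in fact improve with $l$, and the weight $\langle x\rangle^{-s}$ with $s>1$ provides summability across sectors. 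This uniformity, rather than any individual sector, is the main obstacle; granting it, the stated expansions follow by collecting terms, the worst error $O(|\ln k|^{-1})$ arising from the $\mu=1$ case of \eqref{Hm}.
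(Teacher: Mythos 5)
Your proposal is correct and follows essentially the same route as the paper: the decomposition $H=\oplus_l\widetilde H_l\pi_l$, application of Theorem \ref{thm:constr-resolv} in the critical sectors $l\in\N_\gamma$, the results of \cite{Wan1,Wan2} (in particular \eqref{eq:3.45} and \eqref{Hm}) in the subcritical sectors, the multiplicity count via \eqref{expansion}--\eqref{psi}, and reassembly using the uniform-in-$l$ error estimates from \cite{Wan1}. No substantive differences from the paper's argument.
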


The case that  $0$ is an eigenvalue of $H$ can be studied in a similar way. The zero eigenfunctions of $H$ may have several  angular momenta $l >m$ and the asymptotics of $R(k)$ up to $o(1)$ as $k \to 0$ contains many terms and we do not give details here.  Note that if (\ref{eq:gamma}) is not satisfied and $\gamma =  ( l + \tfrac d 2 - 1)^2$ for some $l\in \N\cup\{0\}$, $(\widetilde H_l -k^2)^{-1}$ may contain a term of the
order $\ln k$ as $k \to 0$. 

\bigskip

\section{Asymptotics for full Hamiltonian, more general
  perturbation} \label{Asymptotics for full Hamiltonian,  general
  perturbation}

We shall ``solve'' the equation 
\begin{equation}\label{eq:sch11bb}
 -u''(r) +\parb{V_\infty(r)+V(r)}u(r)=0
\end{equation} on  the interval $I=]0,\infty[$ for a class of potentials $V$ with faster decay than
$V_\infty$ at infinity (recall
$V_\infty(r)=\tfrac{\nu^2-1/4}{r^2}\chi(r>1)$). In particular we shall
show absence of zero eigenvalue  for a more
 general class of perturbations than prescribed by Condition
\ref{cond:asympt-full-hamilt}. Explicitly we keep Conditions
\ref{cond:asympt-full-hamilt} \ref{item:1} and \ref{item:3} but modify
 Condition
\ref{cond:asympt-full-hamilt} \ref{item:2} as 
\begin{enumerate}[label=2)']
  \item $V(r)=O(r^{-2-\epsilon}),\;\epsilon>0.$
\end{enumerate} This means that we now impose

\begin{cond}\label{cond:asympt-full-hamilt2}
 \begin{enumerate}[1)]
  \item \label{item:1b}
  $V\in C(]0,\infty[,\R)$,
\item \label{item:2b} $V(r)=O(r^{-2-\epsilon}),\;\epsilon>0$,
\item \label{item:3b} $\exists C_1,C_2>0\,\exists \kappa>0:
  C_1(r^{-2}+1)\geq V(r)\geq (\kappa^2-1/4)r^{-2}-C_2$.
  \end{enumerate} 
\end{cond}
  
\begin{lemma}
  \label{lemma_bound} Under  Condition
  \ref{cond:asympt-full-hamilt2} suppose $u$ is a distributional solution to
  \eqref{eq:sch11bb}  obeying one of the following two conditions:
\begin{enumerate}[1)]

\item \label{item:5}
    $u\in L^2_{-1}\text{ (at infinity)}.$
\item \label{item:6} $u(r)/\sqrt r\to 0\mand u'(r)\sqrt r\to 0\mfor r\to \infty.$
\end{enumerate} Then
  \begin{equation}
    \label{eq:57b}
    u=0.
  \end{equation}
\end{lemma}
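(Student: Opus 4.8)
The plan is to separate the analysis at infinity from the analysis on a compact set, using the explicit form of $V_\infty$ at infinity and the Hardy-type lower bound in Condition \ref{cond:asympt-full-hamilt2} \ref{item:3b} on the compact set, and then glue the two via unique continuation. First I would observe that the two hypotheses are essentially equivalent for our purposes: if $u\in L^2_{-1}$ at infinity and solves \eqref{eq:sch11bb}, then since $V_\infty(r)+V(r)=\tfrac{\nu^2-1/4}{r^2}+O(r^{-2-\epsilon})$ for large $r$, an ODE asymptotics argument (variation of parameters against the explicit solutions $r^{1/2\pm\nu}=r^{1/2\mp\i\sigma}$ of the Euler equation, with the $O(r^{-2-\epsilon})$ term treated perturbatively) shows that $u(r)=a\,r^{1/2-\i\sigma}+b\,r^{1/2+\i\sigma}+o(r^{1/2})$ and correspondingly for $u'$. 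Both basis solutions have modulus $\sim r^{1/2}$, so $|u(r)|^2\sim |a|^2+|b|^2+\cdots$ times $r$, which is not in $L^2_{-1}=\inp{r}^{-1}L^2$ near infinity unless $a=b=0$; hence \ref{item:5} forces $u(r)/\sqrt r\to 0$ and $u'(r)\sqrt r\to 0$, i.e. condition \ref{item:6}. So it suffices to treat case \ref{item:6}.

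Next, assuming \ref{item:6}, I would run an integration-by-parts (energy) argument on a finite interval $[\rho,N]$ with $\rho>0$ small and $N$ large, exactly in the spirit of \eqref{eq:33}: multiply \eqref{eq:sch11bb} by $\bar u$ and integrate, giving
\[
\int_\rho^N\parb{|u'|^2+(V_\infty+V)|u|^2}\,\d r = \big[\bar u\,u'\big]_\rho^N .
\]
The boundary term at $N$ tends to $0$ by \ref{item:6} (note $\bar u u'=(u/\sqrt N)\,(u'\sqrt N)\to 0$). For the boundary term at $\rho$: because $u$ is a distributional solution lying in the form domain near $0$ is \emph{not} assumed, I would instead use the lower bound $V(r)\geq(\kappa^2-1/4)r^{-2}-C_2$ together with the Hardy inequality $\int_0^\infty|v'|^2\geq \tfrac14\int_0^\infty r^{-2}|v|^2$ (for $v$ vanishing suitably at $0$) to control the behaviour near $r=0$; more precisely, near $0$ the equation is $-u''+(\kappa^2-1/4+o(1))r^{-2}u = \text{l.o.t.}$, whose solutions behave like $r^{1/2\pm\kappa}$, and the Friedrichs/distributional solution must be the $r^{1/2+\kappa}$ branch, so $u(\rho)u'(\rho)\to 0$ as $\rho\downarrow 0$ as well. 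Passing to the limit $\rho\downarrow0$, $N\to\infty$ yields
\[
\int_0^\infty\parb{|u'|^2+(V_\infty+V)|u|^2}\,\d r = 0 .
\]

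Finally, I would extract the contradiction. Taking the imaginary part is what \eqref{eq:33} exploits; here the cleaner route is: the quadratic form $\int(|u'|^2+(V_\infty+V)|u|^2)$ equals $\inp{u,Hu}$ and $Hu=0$, but more usefully, bounding below using $V_\infty+V\geq \tfrac{\nu^2-1/4}{r^2}+(\kappa^2-1/4)r^{-2}\chi(r<1)-C_2\chi(r<1)$ and Hardy shows the form is strictly positive unless $u\equiv 0$ — one must be a little careful since $\nu^2=-\sigma^2<0$ makes $V_\infty$ negative, so the naive Hardy bound is not enough; instead I would use the imaginary-part trick directly, noting that the explicit asymptotics $u=a r^{1/2-\i\sigma}+b r^{1/2+\i\sigma}+\cdots$ gives $\Im(\bar u u')(r)\to \sigma(|a|^2-|b|^2)\cdot$ (something nonzero) unless $a=b=0$, while \ref{item:6} forces this limit to be $0$; combined with the analogous vanishing at $0$ this gives $a=b=0$, so $u=o(r^{1/2})$ at infinity and then, refining the asymptotics once more (the $O(r^{-2-\epsilon})$ correction cannot generate growth), $u$ and $u'$ decay, e.g. $u=O(r^{1/2-\epsilon'})$. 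At that point $u$ is a genuine $L^2$-eigenfunction of a self-adjoint operator at eigenvalue $0$ with, by the strict Hardy bound on the compact part plus decay, $\inp{u,Hu}>0$ unless $u\equiv0$ — contradiction — or, alternatively, one invokes unique continuation from infinity (Agmon-type / Kato's theorem) to conclude $u\equiv0$ on a half-line and then on all of $]0,\infty[$.

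The main obstacle is the analysis at $r=0$: the hypothesis only says $u$ is a distributional solution, not that $u$ lies in $\vD(H)$ or in the form domain, so one cannot simply quote self-adjointness to kill the boundary term at $\rho$. One has to argue via the local indicial analysis of the equation near $0$ (solutions $\sim r^{1/2\pm\kappa}$ with $\kappa>0$) that any distributional solution is automatically the regular branch $r^{1/2+\kappa}$ — using that $r^{1/2-\kappa}$ is not locally square-integrable when $\kappa\geq1$ and, when $0<\kappa<1$, that the singular branch is incompatible with the distributional equation (or is excluded because the resulting $u$ would not satisfy the equation in $\vD'$ across $r=0$). Handling the borderline regimes of $\kappa$ and the limiting case $\kappa=0$ flagged in the text (the $(d,l)=(2,0)$ appendix case) carefully is where the real work lies; everything at infinity is routine ODE perturbation theory.
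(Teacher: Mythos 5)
Your analysis at infinity is sound and is in fact the first half of the paper's own proof: variation of parameters against $\phi^\pm(r)=r^{1/2\pm\nu}$ shows that any distributional solution of \eqref{eq:sch11bb} satisfies $u=a^+\phi^++a^-\phi^-$ with $a^{\pm}(r)\to a^{\pm}(\infty)$ and $a^{\pm}(\infty)-a^{\pm}(r)=O(r^{-\epsilon})$, and either hypothesis \ref{item:5} or \ref{item:6} forces $a^{\pm}(\infty)=0$, hence $u=O(r^{1/2-\epsilon})$ and $u'=O(r^{-1/2-\epsilon})$. The gap is in how you pass from this improved decay to $u=0$. Your route via positivity of the quadratic form cannot work: since $\nu^2-1/4=-\sigma^2-1/4<-1/4$, the tail of $V_\infty$ lies strictly below the Hardy threshold and $H$ has infinitely many negative eigenvalues accumulating at $0$, so $\inp{u,Hu}=0$ (even granting $u\in\vD(H)$ and $u\in L^2$, which your bound $u=O(r^{1/2-\epsilon'})$ does not yet give) does not force $u=0$; you notice the problem but do not repair it. Your alternative, ``unique continuation from infinity'' via Agmon/Kato, is not an applicable off-the-shelf result at zero energy for an $r^{-2}$ tail (those theorems concern positive energies or require exponential decay). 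Finally, the entire discussion at $r=0$ is both unnecessary and partly incorrect: a distributional solution on the open half-line $]0,\infty[$ is in no way obliged to be the regular branch $r^{1/2+\kappa}$ --- for $0<\kappa<1$ the singular branch is locally square integrable and solves the equation in the sense of distributions on $]0,\infty[$ --- and the lemma makes no regularity assumption at the origin, nor does its proof need one.

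The missing idea is an elementary Wronskian argument, carried out entirely at infinity. The same variation-of-parameters computation, applied to an \emph{arbitrary} solution with no decay hypothesis, already yields the a priori bounds $\tilde u=O(r^{1/2})$ and $\tilde u'=O(r^{-1/2})$ for every solution $\tilde u$, i.e.\ \eqref{eq:56b} with $\epsilon=0$. If $u\neq0$, pick a solution $\tilde u$ with constant Wronskian $W(u,\tilde u)=u\tilde u'-\tilde u u'=1$; by the improved decay of $u$ and the a priori bounds on $\tilde u$ this Wronskian is also $O(r^{1/2-\epsilon}\cdot r^{-1/2})+O(r^{-1/2-\epsilon}\cdot r^{1/2})=O(r^{-\epsilon})\to0$, a contradiction. (The paper phrases this as: $\int_1^rW(u,\tilde u)(x)x^{-1}\,\d x=\ln r$ diverges while the integrand is $O(x^{-1-\epsilon})$, hence integrable.) This closes the proof without ever touching $r=0$ and without invoking self-adjointness.
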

\begin{proof} Let $\phi^\pm(r)=r^{1/2\pm \nu}$. Then $\phi^\pm$ 
are linear independent solutions  to the equation 
\begin{equation}\label{eq:sch11}
 -u''(r) +V_\infty(r)u(r)=0;\;r>2.
\end{equation} 
First we shall show that 
\begin{equation}
    \label{eq:56b}
    u=O(r^{1/2-\epsilon})\mand u'=O(r^{-1/2-\epsilon}).
  \end{equation}
 Note that under the condition  \ref{item:5}  in fact $u'\in L^2$ (at
 infinity) due to a standard ellipticity argument.

  We shall  apply  the method of
variation of parameters. Specifically, introduce
``coefficients'' $a_2^+$ and $a_2^-$ of  the ansatz 
\begin{equation}
  \label{eq:ansa}
 u=a^+\phi^++a^-\phi^- .
\end{equation} Using
the differential equations  for $a^+$ and $a^-$ we shall  derive estimates of these
quantities. 

The equations  read 
\begin{equation}\label{kap1}
\left(\begin{array}{cc}\phi^+&\phi^-\\
\frac{\d}{\d\tau}\phi^+&\frac{\d}{\d\tau}\phi^-
\end{array}\right)\frac{\d}{\d\tau}
\left(\begin{array}{c}a^+\\a^-\end{array}\right)
=V
\left(\begin{array}{cc}0&0\\
\phi^+&\phi^-
\end{array}\right)
\left(\begin{array}{c}a^+\\a^-\end{array}\right)
.\end{equation}
 Note that the Wronskian
$W(\phi^-,\phi^+)=\phi^-\tfrac{\d}{\d r}\phi^{+}-\phi^+\tfrac{\d}{\d r}\phi^{-} = 2 \nu$.
 \eqref{kap1} can be transformed into
\begin{equation*} 
 \dfrac{\d}{\d r}\Big ({a^+ \atop a^-}\Big )=N \Big ({a^+ \atop a^-}\Big ),
\end{equation*} where  
\begin{displaymath} 
 N= \frac {V}{2 \nu}\left ( \begin{array}{cc}
\phi^-\phi^+ &(\phi^-)^2\\
-(\phi^+)^2&-\phi^-\phi^+ 
\end{array}\right ).
\end{displaymath}  

Clearly for $V$ obeying Condition \ref{cond:asympt-full-hamilt2}  the quantity $N=O(r^{-1-\epsilon})$ and whence it 
can be integrated to infinity. Whence
there exist
\begin{equation*}
  a^{\pm}(\infty)=\lim_{r\to \infty}a^{\pm}(r);
\end{equation*} in fact
\begin{equation}
  \label{eq:55}
  a^{\pm}(\infty)-a^{\pm}(r)=O(r^{-\epsilon}).
\end{equation}

We need to show that
\begin{equation}
  \label{eq:58p}
   a^{\pm}(\infty)=0.
\end{equation} Note that 
\begin{equation}\label{kap1b}
\left(\begin{array}{cc}\phi^+&\phi^-\\
\frac{\d}{\d\tau}\phi^+&\frac{\d}{\d\tau}\phi^-
\end{array}\right)
\left(\begin{array}{c}a^+\\a^-\end{array}\right)
=
\left(\begin{array}{c}u\\u'\end{array}\right).\end{equation} 
We solve for $(a^+,a^-)$ and multiply the result by
$r^{-1/2}$. Under the condition  \ref{item:5}  each component of the right hand side of the resulting
equation is in $L^2$. Whence also $a^\pm(\infty)/r^{1/2}\in L^2$
and \eqref{eq:58p} and therefore \eqref{eq:56b} follow. We argue
similarly under the condition  \ref{item:6}. 

To show \eqref{eq:57b}  note that the  considerations preceding
\eqref{eq:58p}  hold for all solutions distributional $u$ (not only a 
solution $u$ obeying \ref{item:5} or 
\ref{item:6}) yielding without \ref{item:5} nor 
\ref{item:6}  the
bounds \eqref{eq:56b} with $\epsilon=0$. In particular for a solution $\tilde u$ with
$W(u,\tilde u)=1$ (assuming conversely that $u\neq 0$) we have 
\begin{equation*}
  \int_1^r\,W(u,\tilde u)(x)x^{-1}\,\d x=\ln
    r.
\end{equation*} The right hand side diverges while the left hand side
converges due to \eqref{eq:56b}, and \eqref{eq:57b} follows. 
\end{proof}

Using Lemma \ref{lemma_bound} we can mimic Section \ref{Asymptotics
  for full Hamiltonian, compactly supported perturbation} and obtain
similar results for $H=-\tfrac{\d^2}{\d r^2}+V_\infty+V$ with $V$
satisfying  (the more general) Condition \ref{cond:asympt-full-hamilt2}. In
particular Theorem \ref{thm:constr-resolv} and Corollary
\ref{cor:constr-resolv} hold under 
Condition \ref{cond:asympt-full-hamilt2} provided that we in Theorem
\ref{thm:constr-resolv} impose the additional condition
\begin{equation}
  \label{eq:57}
  s\leq 1+\epsilon/2.
\end{equation} This is here needed to guarantee  that the operators
$K^{\pm}$ of \eqref{eq:29} are  compact on $\vH_s$. Also Theorem 
\ref{thm3.7} has a similar  extension. We leave out further elaboration.

\section{Regular positive energy solutions and asymptotics of phase
  shift}
 \label{Regular positive energy solutions and asymptotics of phase
  shift} Under Condition \ref{cond:asympt-full-hamilt}, or in fact more
generally 
under Condition \ref{cond:asympt-full-hamilt2},  we can define
the notion of regular  positive energy solutions as follows:
Let $k\in \R_+$. A solution $u$ to the equation
  \begin{equation}\label{eq:50h}
   -u''(r)+ \parb{V_\infty(r)+V(r)}u(r)=k^2u(r)
  \end{equation} is  called {\it regular}  if the
  function $r\to \chi(r<1)u(r)$ belongs to $\vD (H)$.  Notice that
  this definition naturally extends the one applied in Section
  \ref{Asymptotics for full Hamiltonian, compactly supported
    perturbation} in the case $k=0$. Again we claim that the regular solution
  $u$ is fixed up to a constant (and hence in particular can be taken
  real-valued): For the uniqueness we may proceed exactly as in
  Appendix \ref{Regular zero energy solutions} (uniqueness at zero energy). For the existence part
  we use the zero energy Green's function $R_0^+$ and the regular zero energy solution
  $u$ appearing in Theorem \ref{thm:constr-resolv}. Consider the
  equation
  \begin{equation}
    \label{eq:59}
    u_{k^2}=u+k^2R_0^+\chi(\cdot<1) u_{k^2}.
  \end{equation} Notice that a solution to \eqref{eq:59} indeed is a
  solution to \eqref{eq:50h} for $r<1$ and hence it can be extended to
  a global solution $\tilde u_{k^2}$. Clearly $\chi(\cdot<1)\tilde
  u_{k^2}\in\vD (H)$ so $\tilde
  u_{k^2}$ is a regular solution. It remains to solve
  \eqref{eq:59} for some nonzero $u_{k^2}$. For that we let $K=R_0^+\chi(\cdot<1) $ and note
  that $K$ is compact on $\vH_{-s}$ for any $s>1$. Whence we have
  \begin{equation}
    \label{eq:60}
    u_{k^2}=(I-k^2K)^{-1}u,
  \end{equation} provided that
  \begin{equation}
    \label{eq:61}
    \Ker (I-k^2K)=\{0\}.
  \end{equation} We are left with showing \eqref{eq:61}. So suppose
  $u_0=k^2Ku_0$ for some $u_0\in \cap_{s>1}\vH_{-s}$,  then we need to
  show that $u_0=0$. Notice that $(H-k^2\chi(\cdot<1))u_0=0$ and that  here
  the second term  can be absorbed into  the potential $V$. The
  computation \eqref{eq:33} shows that also in the present context 
  \begin{equation}
    \label{eq:62}
    0=
\lim_{r\to \infty}\Im \parb{\bar u_0 u'_0}(r) =
\lim_{r\to \infty}\Im \parb{(1/2-\nu)|u_0|^2(r)/r}.
  \end{equation} From \eqref{eq:62} we deduce the condition Lemma \ref{lemma_bound}
  \ref{item:5}  with $u\to
  u_0$ and whence from the conclusion of Lemma \ref{lemma_bound}
  that indeed $u_0=0$.
 
Now let $u_{k^2}$ denote any nonzero real regular solution. By using the
  variation of parameters formula, more specifically by replacing the
  functions $\phi^{\pm}$ in the proof of Lemma \ref{lemma_bound} by
  $\cos(k\cdot)$ and $\sin(k\cdot)$  and repeating the proof (see Step
  I of the proof of Theorem \ref{thm:phase} stated below for details), we find
  the asymptotics
  \begin{equation}
    \label{eq:63}
    \lim_{r\to\infty}
\left(u_{k^2}(r)-C 
\sin \big(k r+\sigma^{\sr}\big
)\right)
=0.
  \end{equation} Here $C=C(k) \neq 0$. Assuming (without loss) that $C>0$  the (real) constant
  $\sigma^{\sr}=\sigma^{\sr}(k)$ is determined modulo $2\pi$. 
  \begin{defn}
    \label{defn:phase shift} The quantity $\sigma^{\sr}=\sigma^{\sr}(k)$
    introduced above is called the {\it phase shift} at energy $k^2$.
  \end{defn}
\begin{defn}
    \label{defn:per phase shift} The notation 
    $\sigma^{\per}=\sigma^{\per}(t)$ signifies  the continuous real-valued $2\pi$-periodic function
    determined by
\begin{equation}\label{eq:def_per}
\begin{cases}
\sigma^{\per}(0)&=0\\
\e^{\pi\sigma}\e^{-\i t}-\e^{\i t}&=r(t)\e^{\i (\sigma^{\per}(t)-t)};\;\;t\in \R,\;r(t)>0
\end{cases}\;.
\end{equation}
\end{defn}

  \begin{thm}
    \label{thm:phase} Suppose Condition
    \ref{cond:asympt-full-hamilt2}. The phase shift $\sigma^{\sr}(k)$ can be chosen
    continuous in $k\in \R_+$. Any such choice  obeys the following asymptotics as
    $k\downarrow 0$:  There exist $C_1,C_2\in \R$ such that
    \begin{equation}
      \label{eq:64}
     \sigma^{\sr}(k)+\sigma\ln k- \sigma^{\per}( \sigma\ln k+C_1)\to 
C_2\mfor k\downarrow 0.    
\end{equation}
\end{thm}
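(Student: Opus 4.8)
\textbf{Proof strategy for Theorem \ref{thm:phase}.}

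The plan is to run the variation-of-parameters argument of Lemma \ref{lemma_bound} with the positive-energy comparison basis $\cos(k\cdot),\sin(k\cdot)$ in place of $\phi^\pm=r^{1/2\pm\nu}$, track the dependence on $k$ carefully, and then \emph{paste} this positive-energy analysis onto the zero-energy resolvent asymptotics of Theorem \ref{thm:constr-resolv}. The key point is that on the region $r\gtrsim 1/k$ the potential $V_\infty(r)=(\nu^2-1/4)r^{-2}\chi(r>1)$ is negligible relative to $k^2$, so there $u_{k^2}$ behaves like a free wave and produces the asymptotic phase $\sigma^{\sr}(k)$; but on the region $1\lesssim r\lesssim 1/k$ the centrifugal-type term $V_\infty$ dominates and the solution is governed by the \emph{zero-energy} regular solution $u$, which near $r\sim 1/k$ oscillates in $\ln r$ with frequency $\sigma$ because $u$ is a combination of $r^{1/2\pm\nu}=r^{1/2\mp\i\sigma}$. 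Matching the two regimes across $r\sim 1/k$ is exactly what manufactures the extra term $-\sigma\ln k$ together with the periodic correction $\sigma^{\per}(\sigma\ln k+C_1)$.

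Concretely I would proceed in steps. \textbf{Step I:} Fix a nonzero real regular solution $u_{k^2}$ to \eqref{eq:50h}; write $u_{k^2}=b^+\cos(k\cdot)+b^-\sin(k\cdot)$ with coefficients $b^\pm=b^\pm(r)$ satisfying a first-order system with matrix $O\parb{(V_\infty+V)/k}$, integrable on $[1/k,\infty[$ after the change of variable $\rho=kr$ (this is where Condition \ref{cond:asympt-full-hamilt2} item \ref{item:2b} is used to handle $V$; the $V_\infty$ part contributes $O(k^{-1}\rho^{-2})$, integrable over $\rho\geq 1$). This gives the existence of limits $b^\pm(\infty)=\lim_{r\to\infty}b^\pm(r)$ with $b^\pm(r)-b^\pm(\infty)=O(k r)^{-1}$ type errors, hence the asymptotics \eqref{eq:63} with $\sigma^{\sr}(k)$ defined up to $2\pi$, and continuity of $\sigma^{\sr}$ in $k$ follows from continuous dependence of solutions on parameters together with a choice of branch. \textbf{Step II:} Normalize $u_{k^2}$ by its behaviour at $r\sim 1$ rather than at $\infty$: use the integral equation \eqref{eq:59}--\eqref{eq:60}, $u_{k^2}=(I-k^2K)^{-1}u$ with $K=R_0^+\chi(\cdot<1)$, so that $u_{k^2}\to u$ as $k\downarrow0$ uniformly on compacts (and in $\vH_{-s}$), where $u$ is the fixed real zero-energy regular solution. \textbf{Step III (the matching):} Evaluate $u_{k^2}$ and $u_{k^2}'$ at a matching radius, say $r_k=1/k$. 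On $1\leq r\leq r_k$ solve \eqref{eq:50h} by treating $k^2 u$ as a perturbation of $-u''+(V_\infty+V)u=0$: variation of parameters against $r^{1/2+\nu},r^{1/2-\nu}$ shows $u_{k^2}(r)=u(r)+O(k^2 r^{5/2})$ and correspondingly for the derivative, so at $r=r_k$ we get $u_{k^2}(r_k)=u(r_k)(1+O(k^{1/2}))$ etc. Now insert the explicit form of $u$: there are constants (real, nonzero) $A,\varphi_0$ with $u(r)=A\,r^{1/2}\sin(\sigma\ln r+\varphi_0)+(\text{lower order})$ for large $r$, coming from $r^{1/2\mp\i\sigma}=r^{1/2}\e^{\mp\i\sigma\ln r}$ and the fact that $u$ is real (this is established in Appendix \ref{Regular zero energy solutions}, which I invoke). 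Evaluating at $r_k=1/k$ turns $\sigma\ln r_k$ into $-\sigma\ln k$, producing amplitude and phase data at $r_k$ of the form $r_k^{1/2}$ times trigonometric functions of $-\sigma\ln k+\varphi_0$.

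\textbf{Step IV (propagating past $r_k$ and reading off the phase):} For $r\geq r_k$ the solution is, up to $o(1)$ errors, the free wave $C(k)\sin(kr+\sigma^{\sr}(k))$; matching its value and derivative at $r=r_k$ to the data obtained in Step III, and using $kr_k=1$, gives a system of two equations that determines $C(k)$ and, crucially, $\sin(kr_k+\sigma^{\sr}(k))=\sin(1+\sigma^{\sr}(k))$ and its derivative in terms of the Step III data, i.e. in terms of $\e^{\pm\i(-\sigma\ln k+\varphi_0)}$. Here the weight $r^{1/2\pm\i\sigma}$ splits into an $\e^{-(\Im k)r}$-growing/decaying pair once $k$ has a small imaginary part, but at real energy both $r^{1/2+\i\sigma}$ and $r^{1/2-\i\sigma}$ are genuinely present, and the \emph{ratio} of the outgoing to incoming coefficient of $u$ — which is exactly the quantity $\zeta(k)$-type object $\e^{-\sigma\pi}D_\nu k^{2\nu}/\bar D_\nu$ appearing in Section \ref{Model asymptotics} — is what turns the pure phase $kr+\sigma^{\sr}$ into something of the form $\sigma^{\per}(\sigma\ln k+C_1)$. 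Unwinding: one finds $\e^{\i\sigma^{\sr}(k)}$ is asymptotically proportional to $\parb{\e^{\pi\sigma}\e^{-\i t}-\e^{\i t}}$ evaluated at $t=\sigma\ln k+C_1$ for a suitable real constant $C_1$ absorbing $\varphi_0$, $\arg D_\nu$, the fixed phase $1$ from $kr_k$, etc.; comparing with Definition \ref{defn:per phase shift} this says precisely $\sigma^{\sr}(k)+\sigma\ln k-\sigma^{\per}(\sigma\ln k+C_1)\to C_2$ for some real $C_2$. Throughout, the error terms are $O(k^{\epsilon'})$ for some $\epsilon'>0$ coming from the $O(k^2 r^{5/2})$ matching error at $r_k=1/k$ (which is $O(k^{1/2})$) and from the $V$-decay, so the convergence in \eqref{eq:64} is actually quantitative.

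\textbf{Main obstacle.} The delicate part is Step IV: carefully tracking the two complex amplitudes of $u$ (the coefficients of $r^{1/2+\i\sigma}$ and $r^{1/2-\i\sigma}$) through the matching at $r=1/k$ and then through the free region, so that the resulting expression for $\e^{\i\sigma^{\sr}(k)}$ is \emph{exactly} of the structural form $\e^{\pi\sigma}\e^{-\i t}-\e^{\i t}$ at $t=\sigma\ln k+C_1$ modulo a constant phase and an $o(1)$ error — in particular showing that the $\e^{\pi\sigma}$ weighting (and not some other coefficient) emerges, which is forced by the relation $H^{(1)}_\nu=\e^{-\sigma\pi}\cdot(\text{incoming})+(\text{outgoing})$ already visible in \eqref{eq:17}--\eqref{eq:18}. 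Keeping the bookkeeping of $2\pi$-ambiguities consistent with the continuity claim for $\sigma^{\sr}$ is a secondary but genuine nuisance, handled by fixing the branch at one value of $k$ and propagating by continuity.
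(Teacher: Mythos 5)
Your Steps I and II (continuity via the construction $u_{k^2}=(I-k^2K)^{-1}u$, and variation of parameters against $\cos(k\cdot),\sin(k\cdot)$ to define $\sigma^{\sr}$) match the paper's Step I and are fine. The gap is in your Steps III--IV, the matching at $r_k=1/k$. First, the quantitative claim does not close: the correction in $u_{k^2}(r)=u(r)+O(k^2r^{5/2})$ evaluated at $r=1/k$ is $O(k^{-1/2})$, which is of the \emph{same order} as $u(1/k)\sim A k^{-1/2}$; the relative error is $O(k^2r^2)=O(1)$ at your matching radius, not $O(k^{1/2})$ as you state. Shrinking the radius to $k^{-1+\delta}$ repairs this, but then $kr_k\to 0$ and the free-wave form $C\sin(kr+\sigma^{\sr})$ is not yet valid there, since $V_\infty(r)\gtrsim k^2$ throughout $r\lesssim 1/k$: for an $r^{-2}$ tail the two asymptotic regimes have no overlap region, which is precisely why the semiclassical formula \eqref{eq:relat} fails in this setting. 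Second, and more fundamentally, matching value and derivative at a single point between the two \emph{approximate} forms cannot produce the weight $\e^{\pi\sigma}$ in $\e^{\pi\sigma}\e^{-\i t}-\e^{\i t}$: that coefficient is a global connection coefficient of the Bessel equation (formula \eqref{eq:3}, where $\e^{-\i\nu\pi}=\e^{-\sigma\pi}$), invisible from local Cauchy data; a naive match would instead yield an arctangent of a ratio of trigonometric functions of $\sigma\ln k$, i.e. a different (wrong) periodic correction. You name the right ingredient in your ``main obstacle'' paragraph, but it is not wired into the mechanics of your argument.

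The paper's route removes the matching radius altogether: take as comparison basis the \emph{exact} solutions $\phi^+(r)=r^{1/2}H^{(1)}_\nu(kr)$ and $\phi^-(r)=r^{1/2}\overline{H^{(1)}_{-\nu}(kr)}$ of the $V_\infty$-equation at energy $k^2$, as in \eqref{eq:67i}, so that the coefficients $a^\pm(r)$ are constant for $r\geq R$ with $R$ \emph{fixed} when $V$ is compactly supported (the general case is handled by letting $R\to\infty$ along a sequence on which $|D(R_n)|$ stays bounded below, via Lemma \ref{lemma_bound}). The connection formula \eqref{eq:65pp} at $r=R$ then only requires $u_{k^2}(R),u_{k^2}'(R)=u(R),u'(R)+O(k^2)$, which is exactly what your Step II provides, and the small-$k$ expansions \eqref{eq:75}--\eqref{eq:78} of the Hankel functions at $kR$ produce the combination $\e^{\sigma\pi}\overline{D}k^{\nu}-Dk^{-\nu}$ in the coefficient of $\e^{\i kr}$ in \eqref{eq:79}, which is literally the defining expression for $\sigma^{\per}$ at $t=\sigma\ln k+\arg D$. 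Replacing your Steps III--IV by this exact-basis computation turns your outline into the paper's proof.
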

\begin{proof}
\noindent {\bf Step I} We shall show the continuity. From
\eqref{eq:59} and \eqref{eq:60} we see that for any $r>0$ the functions
$]0,\infty[\ni k\rightarrow u_{k^2}(r)$ and $]0,\infty[\ni
k\rightarrow u_{k^2}'(r)$ are continuous. Similar statements hold upon
replacing 
$u_{k^2}\rightarrow \Re u_{k^2}$ and $u_{k^2}\rightarrow \Im u_{k^2}$
which are both real-valued regular solutions (solving \eqref{eq:50h} for $r<1$). Since $ u_{k^2}\neq 0$ one of
these functions  must be nonzero. Without loss we can assume
that $ u_{k^2}$ is a real-valued nonzero regular solution obeying 
that for
$r=1/2$  the functions
$]0,\infty[\ni k\rightarrow u_{k^2}(r)$ and $]0,\infty[\ni
k\rightarrow u_{k^2}'(r)$ are continuous. By a standard regularity
result for linear ODE's with continuous coefficients these  results then hold  for any $r>0$
too. Moreover  (to used in Step II) we have (again for $r>0$ fixed)
\begin{subequations}
  \begin{align}
  \label{eq:65}
  u_{k^2}(r)&- u(r)=O(k^2) \mfor k\downarrow 0\\
 u_{k^2}'(r)&- u'(r)=O(k^2) \mfor k\downarrow 0.\label{eq:66}
\end{align}
\end{subequations}

We introduce
\begin{equation}
  \label{eq:67}
  \phi^+(r)=\cos kr\mand  \phi^-(r)=\sin kr.
\end{equation} Mimicking the proof of Lemma \ref{lemma_bound} we write 
\begin{equation}
  \label{eq:ansa2}
 u_{k^2}=a^+\phi^++a^-\phi^-.
\end{equation} Noting that the Wronskian
$W(\phi^-,\phi^+)=-k$
we have 
\begin{equation} \label{eq:69}
 \dfrac{\d}{\d r}\Big ({a^+ \atop a^-}\Big )=N \Big ({a^+ \atop a^-}\Big ),
\end{equation} where  
\begin{align} \label{eq:70}
 N= -k^{-1}(V_\infty+V)\left ( \begin{array}{cc}
\phi^-\phi^+ &(\phi^-)^2\\
-(\phi^+)^2&-\phi^-\phi^+ 
\end{array}\right ).
\end{align} Since $N=O(r^{-2})$ there exist
\begin{equation}\label{eq:71}
  a^{\pm}(\infty)=\lim_{r\to \infty}a^{\pm}(r).
\end{equation} By the same argument as before either $a^{+}(\infty)\neq
0$ or $a^{-}(\infty)\neq 0$. We write
\begin{equation}
  \label{eq:68}
  (a^{+}(\infty),a^{-}(\infty))/\sqrt{a^{+}(\infty)^2+a^{-}(\infty)^2}=(
\sin
\sigma^{\sr},\cos \sigma^{\sr})
    \end{equation} and conclude the asymptotics \eqref{eq:63} with some
$C\neq 0$. It remains to see  that $a^{\pm}(\infty)$ are continuous in
$k$ (then by \eqref{eq:68} $\sigma^{\sr}$ can be chosen  continuous
too). For that we use the ``connection formula''  
\begin{displaymath} 
 \Big ({u_{k^2} \atop u_{k^2}'}\Big )= \left ( \begin{array}{cc}
\phi^+ &\phi^-\\
{\phi^+}'&{\phi^-}' 
\end{array}\right )\Big ({a^+ \atop a^-}\Big )
\end{displaymath} which is ``solved'' by
\begin{align} \label{eq:65p}
  \Big ({a^+ \atop a^-}\Big )= -k^{-1}\left ( \begin{array}{cc}
      {\phi^-}'&-\phi^-\\
      -{\phi^+}'&\phi^+ 
\end{array}\right )\Big ({u_{k^2} \atop u_{k^2}'}\Big ).
\end{align} We use \eqref{eq:65p} at $r=1/2$. By the comments at the
beginning of the proof the right hand side is  continuous in
$k$ and therefore so is the left hand side. Solving \eqref{eq:69} by integrating from $r=1/2$ and noting that
\eqref{eq:70} is continuous in
$k$ we then conclude that $a^\pm(r)$ are continuous in
$k$ for any $r>1/2$. Since the limits \eqref{eq:71} are taken locally uniformly in
$k>0$ we consequently deduce that indeed $a^{\pm}(\infty)$ are continuous in
$k$.

\noindent {\bf Step II} We shall show \eqref{eq:64} under Condition
\ref{cond:asympt-full-hamilt}.
We shall mimic Step I with \eqref{eq:67} replaced by 
\begin{equation}
  \label{eq:67i}
  \phi^+(r)=r^{1/2}H_\nu^{(1)}(kr)\mand  \phi^-(r)=r^{1/2}\overline {H_{-\nu}^{(1)}(kr)}.
\end{equation} For completeness of presentation note  that in terms of another  Hankel function, 
cf. \cite[(3.6.31)]{Ta1}, 
$\phi^-(r)=r^{1/2}{H_{\nu}^{(2)}(kr)}$. We compute the Wronskian
$W(\phi^-,\phi^+)=4\i/\pi$, cf. \eqref{eq:3} and \cite[(3.6.27)]{Ta1}. Since $V(r)=0$ for
$r\geq R$
\begin{equation}
  \label{eq:72}
  a^{\pm}(r)=a^{\pm}(\infty)\mfor r\geq R.
\end{equation} Moreover \eqref{eq:65p} reads
\begin{align} \label{eq:65pp}
  \Big ({a^+ \atop a^-}\Big )= \tfrac{\pi}{4\i}\left ( \begin{array}{cc}
      {\phi^-}'&-\phi^-\\
      -{\phi^+}'&\phi^+ 
\end{array}\right )\Big ({u_{k^2} \atop u_{k^2}'}\Big ).
\end{align} We will use \eqref{eq:65pp} at $r=R$. Clearly the right
hand side is continuous in
$k>0$  and therefore so is the left hand side. From the asymptotics
\begin{align}
  \label{eq:73}
 \phi^+ (r)-C_\nu \parb{\tfrac {2}{\pi
    k}}^{1/2}\e^{\i kr}&\rightarrow 0\mfor r\rightarrow \infty,\\
\phi^- (r)-\overline{ C_{-\nu}} \parb{\tfrac {2}{\pi
    k}}^{1/2}\e^{-\i kr}&\rightarrow 0\mfor r\rightarrow \infty;\label{eq:74}\\
C_\nu&:=\e^{-\i \pi(2\nu +1)/4},\nonumber
\end{align} we may readily rederive the continuity statement shown
more generally in  Step I. The point is that now we can ``control'' the
limit $k\rightarrow 0$. To see this we need to compute the asymptotics
of the matrix in \eqref{eq:65pp} as  $k\rightarrow 0$ (with
$r=R$). Using \eqref{eq:3} we compute
\begin{subequations}
 \begin{align}
  \label{eq:75}
  \phi^+ (R)&=\frac{ 1}{\i \sin (\nu\pi)}\parbb{ \frac{ 2^\nu
      R^{\frac12 -\nu}}{\Gamma(1-\nu)}k^{-\nu}-\e^{-\sigma\pi}\frac{
      2^{-\nu} R^{\frac12 +\nu}}{\Gamma(1+\nu)}k^{\nu}+O(k^2)},\\
\phi^- (R)&=\frac{ 1}{-\i \sin (\nu\pi)}\parbb{ \frac{ 2^\nu
      R^{\frac12 -\nu}}{\Gamma(1-\nu)}k^{-\nu}-\e^{\sigma\pi}\frac{
      2^{-\nu} R^{\frac12
        +\nu}}{\Gamma(1+\nu)}k^{\nu}+O(k^2)}\label{eq:76},\\
{\phi^+ }'(R)&=\frac{ 1}{\i \sin (\nu\pi)}\parbb{ (2^{-1} -\nu)\frac{ 2^\nu
      R^{-\frac12 -\nu}}{\Gamma(1-\nu)}k^{-\nu}-\e^{-\sigma\pi}( 2^{-1}+\nu)\frac{
      2^{-\nu} R^{-\frac12
        +\nu}}{\Gamma(1+\nu)}k^{\nu}+O(k^2)},\label{eq:77}\\
{\phi^- }'(R)&=\frac{ 1}{-\i \sin (\nu\pi)}\parbb{ (2^{-1} -\nu)\frac{ 2^\nu
      R^{-\frac12 -\nu}}{\Gamma(1-\nu)}k^{-\nu}-\e^{\sigma\pi}(2^{-1} +\nu)\frac{
      2^{-\nu} R^{-\frac12 +\nu}}{\Gamma(1+\nu)}k^{\nu}+O(k^2)}.\label{eq:78}
\end{align} 
\end{subequations}  

 We combine \eqref{eq:65} and \eqref{eq:66} for $r=R$ with \eqref{eq:72}--\eqref{eq:78} and obtain
 \begin{align}
   \label{eq:79}
 u_{k^2}(r)&=  \parb{\tfrac {2}{\pi
    k}}^{1/2}\parbb{ \tfrac{\pi}{4\i}\frac{ C_\nu}{\i \sin
  (\nu\pi)}\parb{\e^{\sigma\pi}\overline{D}k^{\nu}-Dk^{-\nu}}+O(k^2)}\e^{\i
  kr}+{\rm h.c.}+o(r^0);\\
D&:= \frac{ 2^\nu
      R^{\frac12 -\nu}}{\Gamma(1-\nu)} \parbb{\tfrac {2^{-1} -\nu}{R}u(R)-u'(R)}.\nonumber
 \end{align} Here  the term $O(k^2)$ depends on $R$ but not on $r$ and
 the term $o(r^0)$ depends on $k$. The second term, denoted by h.c.,  is given as the
 hermitian (or complex)  conjugate of the first term. Note that $D\neq 0$.

We write $D=|D|\e^{\i \theta_0}$ yielding 
\begin{equation}
  \label{eq:81}
  \e^{\sigma\pi}\overline{D}k^{\nu}-Dk^{-\nu}=|\e^{\sigma\pi}\overline{D}k^{\nu}-Dk^{-\nu}|\e^{\i (
  \sigma^{\per}(\sigma 
\ln k+\theta_0 )-(\sigma 
\ln k+\theta_0 ))}.
\end{equation}
 Next we substitute \eqref{eq:81} into \eqref{eq:79}, use that
 $C_\nu=|C_\nu|\e^{-\i \pi/4}$ and conclude \eqref{eq:64} with
 \begin{equation}
   \label{eq:88}
   C_1=\theta_0  \mand C_2=\pi/4-\theta_0+2\pi p\;\text {for some }p\in\Z.
 \end{equation}

\noindent {\bf Step III} We shall show \eqref{eq:64} under Condition
\ref{cond:asympt-full-hamilt2}. This is done by modifying Step II
using the proof of Step I too. Explicitly using again the functions
$\phi^\pm$ of \eqref{eq:67i} ``the coefficients'' $a^\pm$ need to be
constructed. Since $V$ is not assumed to be compactly supported these
coefficients will now depend on $r$. We first construct them at any
large $R$, this is by the formula \eqref{eq:65pp} (at $r=R$). Then the
modification of \eqref{eq:69}
\begin{equation} \label{eq:69bnn}
 \dfrac{\d}{\d r}\Big ({a^+ \atop a^-}\Big )=N \Big ({a^+ \atop a^-}\Big ),
\end{equation} with   
\begin{align} \label{eq:70nn}
 N= \tfrac{\pi}{4\i}V\left ( \begin{array}{cc}
\phi^-\phi^+ &(\phi^-)^2\\
-(\phi^+)^2&-\phi^-\phi^+ 
\end{array}\right ),
\end{align} is invoked. We integrate to infinity using that
$N=O(r^{-1-\epsilon})$ uniformly in $k>0$. This leads to
\begin{subequations}
 \begin{align}\label{eq:71nn}
 a^{\pm}(r)&= a^{\pm}(\infty)+O(r^{-\epsilon}),\\
a^{\pm}(R)&= a^{\pm}(\infty)+O(R^{-\epsilon}),\label{eq:89}
\end{align}  with the error estimates  being uniform in $k>0$.
\end{subequations}  In particular for $r\geq R$
\begin{equation}
  \label{eq:90}
  a^{\pm}(r)=a^{\pm}(R)+O(R^{-\epsilon})+O(r^{-\epsilon})
\end{equation} uniformly in $k>0$. 

From \eqref{eq:90} we obtain the following modification of  \eqref{eq:79}
\begin{align*}
 u_{k^2}(r)&=  \parb{\tfrac {2}{\pi
    k}}^{1/2}\parbb{ \tfrac{\pi}{4\i}\frac{ C_\nu}{\i \sin
  (\nu\pi)}\parb{\e^{\sigma\pi}\overline{D}k^{\nu}-Dk^{-\nu}}+O(k^2)+O(R^{-\epsilon})}\e^{\i
  kr}+{\rm h.c.}+o(r^0);\\
D=D(R)&:= \frac{ 2^\nu
      R^{\frac12 -\nu}}{\Gamma(1-\nu)} \parbb{\tfrac {2^{-1} -\nu}{R}u(R)-u'(R)}.
 \end{align*} The term $O(k^2)$ depends on $R$, and the term
 $O(R^{-\epsilon})$ depends on $k$ but it is estimated uniformly in
 $k>0$.
By Lemma \ref{lemma_bound} the exist $\delta>0$ and a sequence $R_n\to \infty$ such
that
\begin{equation}
  \label{eq:94}
  |D(R_n)|\geq \delta\mforall n.
\end{equation} 
Using these values of $D$  in  \eqref{eq:81} we can write
 \begin{align}
  \label{eq:81hh}
  \e^{\sigma\pi}\overline{D}k^{\nu}-Dk^{-\nu}&=|\e^{\sigma\pi}\overline{D}k^{\nu}-Dk^{-\nu}|\e^{\i (
  \sigma^{\per}(\sigma 
\ln k+\theta )-(\sigma 
\ln k+\theta ))};\\
D&=D(R_n),\;\theta=\theta_n\in [0,2\pi[.\nonumber
\end{align} 
We can assume that for some $\theta_0\in [0,2\pi]$
\begin{equation}
  \label{eq:56}
  \theta_n\to \theta_0\mfor n\to \infty.
\end{equation} Using this number $\theta_0$ we obtain again
\eqref{eq:64} with $C_1$ and $C_2$ given as in \eqref{eq:88}.
\end{proof}

\bigskip

\section{Asymptotics of physical phase shift for a potential like $-\gamma \chi(r>1)r^{-2}$} \label{Asymptotics of physical phase shift for a potential}

We shall reduce a $d$-dimensional
Schr\"odinger equation to angular momentum sectors and discuss the
asymptotics of the ``physical'' phase shift for small angular momenta
in the low energy regime.

We consider for  $d\geq 2$ the stationary $d$-dimensional
Schr\"odinger equation
\begin{equation*}
  Hv=(-\triangle +W)v=\lambda v;\; \lambda >0,
\end{equation*}
 for a radial
potential $W=W(|x|)$ obeying 

\begin{cond}\label{cond:asympt-full-hamilt2b}
 \begin{enumerate}[1)]
\item \label{item:4}$W(r)=W_1(r)+W_2(r);\;W_1(r)=-\tfrac{\gamma}{r^2}\chi(r>1)\mfor\text{
  some }
   \gamma>0$,
  \item \label{item:1bb}
  $W_2\in C(]0,\infty[,\R)$,
\item \label{item:2bb} $\exists\, \epsilon_1,C_1>0:\;|W_2(r)|\leq
  C_1r^{-2-\epsilon_1}\mfor r>1$,
\item \label{item:3bb} $\exists\, \epsilon_2,C_2>0:
  |W_2(r)|\leq C_2r^{\epsilon_2-2}\mfor r\leq  1$.
  \end{enumerate} 
\end{cond}

 Under Condition \ref{cond:asympt-full-hamilt2b} $H=-\triangle +W$ is self-adjoint
 as defined in terms of the Dirichlet form on $H^1(\R^d)$, cf. \cite{DS}. Let  $H_l$,
 $l=0,1,\dots$, be the  corresponding 
reduced  Hamiltonian corresponding to an eigenvalue
$l(l+d-2)$ of
the Laplace-Beltrami operator on $\bS^{d-1}$ 
\begin{equation}
  \label{eq:80}
  H_lu=-u''+(V_\infty+V)u.
\end{equation} Here
\begin{subequations}
  \begin{align}
  \label{eq:83}
 V_\infty(r)&= \tfrac{\nu^2-1/4}{r^2}\chi(r>1);\; \nu^2=(l+\tfrac d2
-1)^2-\gamma,\\
V(r)&=W_2(r)+ \tfrac{(l+\tfrac d2
-1)^2-1/4}{r^2}\big (1-\chi(r>1)\big ),\label{eq:84} 
\end{align}
\end{subequations}
 and  the stationary equation reads
\begin{equation}
  \label{eq:82}
 -u''+(V_\infty+V)u=\lambda u. 
\end{equation} Notice that for
\begin{equation}
  \label{eq:85}
  \gamma>(l+\tfrac d2
-1)^2 ,
\end{equation}
and 
\begin{equation}
  \label{eq:85b}
 (d,l)\neq (2,0),
\end{equation}
 indeed Condition
\ref{cond:asympt-full-hamilt2} is fulfilled and $H_l$ coincides with
the Hamiltonian  given by the
construction of 
Section \ref{Asymptotics for full Hamiltonian,  general
  perturbation}. The case $(d,l)= (2,0)$ needs a separate 
consideration which is given in Appendix \ref{Case (d,l)eq (2,0)}.

 Under the conditions \eqref{eq:85} and \eqref{eq:85b} let  $u_l$ be a regular
solution to the reduced Schr\"odinger equation \eqref{eq:82}. Write 
\begin{equation}
  \lim_{r\to\infty}
\left(u_l(r)-C 
\sin \big(\sqrt \lambda r+D_l\big
)\right)
=0. \label{eq:phase shift22ab1}
\end{equation} The standard definition of the
phase shift (coinciding with the time-depending definition)
 is
 \begin{equation}
   \label{eq:phase shift22b=}
   \sigma_l^\phy(\lambda)=D_l+\tfrac{d-3+2l}{4}\pi.
 \end{equation}
It is known from  \cite{Ya, DS} that for a potential $W(r)$ behaving at
infinity like
$-\gamma r^{-\mu}$   with $\gamma>0$ and $\mu\in
]1,2[$  
\begin{equation}
  \label{eq:relat}
  \exists \sigma_0\in\R: \;\sigma_l^\phy(\lambda)-\int^\infty_{R_0}\Big(
   \sqrt{\lambda}-\sqrt{\lambda-W(r)}\Big )\,\d
   r\to \sigma_0\mfor \lambda \downarrow 0.
\end{equation}
 Here $R_0$ is any sufficiently big positive number, and the integral  does not have a
(finite) limit as $\lambda\downarrow0$. In the present case, $\mu=2$, 
(\ref{eq:relat}) indicates a logarithmic divergence. This is indeed
occurring  although (\ref{eq:relat})  is 
incorrect for $\mu=2$. The correct behaviour of the phase shift under
the conditions \eqref{eq:85} and \eqref{eq:85b} follows directly from Section \ref{Regular positive energy solutions and asymptotics of phase
  shift}:
\begin{thm}
    \label{thm:phasebb} Suppose Condition
    \ref{cond:asympt-full-hamilt2b} and \eqref{eq:85} for some $l\in\N\cup\{0\}$. Let
    \begin{equation}
      \label{eq:98}
      \sigma=\sqrt{\gamma-(l+\tfrac d2
-1)^2}.
    \end{equation}
 The phase shift $\sigma^{\phy}_l(\lambda)$ can be chosen
    continuous in $\lambda\in \R_+$. Any such choice  obeys the following asymptotics as
    $\lambda\downarrow 0$:  There exist $C_1,C_2\in \R$ such that
    \begin{equation}
      \label{eq:64vv}
     \sigma^{\phy}_l(\lambda)+\sigma\ln \sqrt{\lambda}- \sigma^{\per}( \sigma\ln \sqrt{\lambda}+C_1)\to 
C_2\mfor \lambda\downarrow 0.    
\end{equation}
\end{thm}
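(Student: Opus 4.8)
The plan is to obtain Theorem~\ref{thm:phasebb} as an immediate consequence of Theorem~\ref{thm:phase}, applied in the relevant angular momentum sector. The first step is the reduction already recorded above: when $(d,l)\neq(2,0)$, i.e.\ when \eqref{eq:85b} holds, the operator $H_l$ of \eqref{eq:80} is exactly the operator $H=-\tfrac{\d^2}{\d r^2}+V_\infty+V$ treated in Section~\ref{Asymptotics for full Hamiltonian, general perturbation}. Indeed, the potential $V$ in \eqref{eq:84} is continuous on $]0,\infty[$, it is $O(r^{-2-\epsilon_1})$ at infinity by Condition~\ref{cond:asympt-full-hamilt2b}~\ref{item:2bb}, so Condition~\ref{cond:asympt-full-hamilt2}~\ref{item:2b} holds with $\epsilon=\epsilon_1$; and the lower bound in Condition~\ref{cond:asympt-full-hamilt2}~\ref{item:3b} follows from Condition~\ref{cond:asympt-full-hamilt2b}~\ref{item:3bb} together with $(l+\tfrac d2-1)^2>0$, which forces a strictly positive $\kappa$. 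Under \eqref{eq:85} the index $\nu=-\i\sigma$ in $V_\infty$ has $\sigma=\sqrt{\gamma-(l+\tfrac d2-1)^2}$, matching \eqref{eq:98}, and the notion of \emph{regular} positive energy solution of the stationary equation \eqref{eq:82} coincides with the one used in Section~\ref{Regular positive energy solutions and asymptotics of phase shift}.

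The second step is to match the two phase shifts. With $k=\sqrt\lambda$, take in \eqref{eq:phase shift22ab1} the same real nonzero regular solution as in \eqref{eq:63}; comparing the two asymptotic normalisations at $r\to\infty$ (both with the convention $C>0$) gives $D_l=\sigma^{\sr}(\sqrt\lambda)$ modulo $2\pi$, hence by \eqref{eq:phase shift22b=}
\[
  \sigma^{\phy}_l(\lambda)=\sigma^{\sr}(\sqrt\lambda)+\tfrac{d-3+2l}{4}\pi .
\]
Since $\tfrac{d-3+2l}{4}\pi$ is a fixed constant, the continuity of $\sigma^{\sr}(k)$ in $k\in\R_+$ proved in Theorem~\ref{thm:phase} yields a continuous choice of $\sigma^{\phy}_l$ in $\lambda\in\R_+$ (unique up to an additive constant in $2\pi\Z$), and substituting $k=\sqrt\lambda$ into the asymptotics \eqref{eq:64} gives
\[
  \sigma^{\phy}_l(\lambda)+\sigma\ln\sqrt\lambda-\sigma^{\per}\big(\sigma\ln\sqrt\lambda+C_1\big)\longrightarrow C_2+\tfrac{d-3+2l}{4}\pi\qquad(\lambda\downarrow 0),
\]
which is \eqref{eq:64vv} after renaming the limiting constant. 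This covers every $l$ with $(d,l)\neq(2,0)$ without further computation.

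The only genuine obstacle is the excluded case $(d,l)=(2,0)$: there one has $(V_\infty+V)(r)=-\tfrac{1}{4r^2}$ near $r=0$, the lower bound of Condition~\ref{cond:asympt-full-hamilt2}~\ref{item:3b} holds only with the borderline value $\kappa=0$, and the form domain of the Friedrichs realisation of $H_0$ is no longer $H^1_0(\R_+)$, so Sections~\ref{Asymptotics for full Hamiltonian, general perturbation} and \ref{Regular positive energy solutions and asymptotics of phase shift} do not apply verbatim. I would treat this case separately in Appendix~\ref{Case (d,l)eq (2,0)}, re-establishing in that singular setting the absence of a zero eigenvalue, the existence and uniqueness (up to a real multiple) of the regular zero- and positive-energy solutions, and then rerunning the variation-of-parameters argument of Steps~I--III in the proof of Theorem~\ref{thm:phase} with $\phi^\pm$ built from the Hankel functions of index $\nu=-\i\sigma$, $\sigma=\sqrt\gamma$. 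The estimates have the same structure and again produce \eqref{eq:64vv}; the only new ingredient is the more delicate control near the origin, handled as in the limiting case $\kappa=0$ flagged after Condition~\ref{cond:asympt-full-hamilt}.
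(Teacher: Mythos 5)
Your proposal is correct and follows essentially the same route as the paper: the theorem is obtained by checking that for $(d,l)\neq(2,0)$ the reduced operator $H_l$ falls under Condition~\ref{cond:asympt-full-hamilt2}, identifying $\sigma^{\phy}_l(\lambda)=\sigma^{\sr}(\sqrt\lambda)+\tfrac{d-3+2l}{4}\pi$, and invoking Theorem~\ref{thm:phase}, with the borderline case $(d,l)=(2,0)$ deferred to the modifications outlined in Appendix~\ref{Case (d,l)eq (2,0)}. The paper gives no more detail than this either, so nothing is missing.
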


 Note that we have included the case $(d,l)= (2,0)$ in this
 result. The necessary modifications of Section \ref{Regular positive energy solutions and asymptotics of phase
  shift} for this case are outlined in Appendix \ref{Case (d,l)eq (2,0)}.

\appendix

\bigskip

\section{Regular zero energy solutions} \label{Regular zero energy solutions}
We shall elaborate on the notion of regular solutions  as used in
Sections \ref{Asymptotics for full Hamiltonian, compactly supported
  perturbation} and \ref{Asymptotics for full Hamiltonian,  general
  perturbation}. Recall from the discussion around \eqref{eq:50} that
we call a solution $u$ to \eqref{eq:50} for regular if $r\to
\chi(r<1)u(r)$ belongs to $\vD (H)$ where $H$ is defined in terms of
a  potential $V$ satisfying Condition \ref{cond:asympt-full-hamilt}
(or Condition \ref{cond:asympt-full-hamilt2}). The existence of a
(nonzero) 
regular solution is shown  explicitly by the formula 
  (\ref{eq:49ii}). We shall show that the
  regular solution is 
unique up to a constant. Notice that  as a  consequence of this uniqueness
result a regular solution is real-valued up to  constant. 

Suppose conversely that all solutions are regular. Due to
\cite[Theorem X.6 (a)]{RS} there exists a nonzero solution $v$ to 
\begin{equation}\label{eq:50e}
   -v''(r)+ \parb{V_\infty(r)+V(r)}v(r)=\i v(r)
  \end{equation} which is in $L^2$ at infinity. By the variation of
  parameter formula now
  based on the basis of regular solutions to \eqref{eq:50}, cf. the
  proof of \cite[Theorem X.6 (b)]{RS}, we conclude that $v\in \vD(H)$
  and that $(H-\i)v=0$. This violates that $H$ is self-adjoint.
  
\bigskip

\section{ Case $(d,l)= (2,0)$} \label{Case (d,l)eq (2,0)} 
For $(d,l)= (2,0)$ Condition \ref{cond:asympt-full-hamilt2} fails for
the  operator $H_l$ of Section \ref{Asymptotics of physical phase shift for a potential} (this
example would require $\kappa=0$ in Condition
\ref{cond:asympt-full-hamilt2} \ref{item:3b}). The form domain is not
$H^1_0(\R_+)$ is this case. The form  is given as
follows:
\begin{subequations}
  \begin{align}
  \label{eq:87}
  \vD (Q)&=\{f\in L^2(\R_+)| \,g\in L^2(\R_+)\text{ where
  }g(r)=f'(r)-\tfrac{1}{2r}f(r)\},\\
Q(f)&= \int_0^\infty\parbb{|f'(r)-\tfrac{1}{2r}f(r)|^2+W(r)|f(r)|^2}\,\d r;\;f\in \vD (Q).\label{eq:95}
\end{align}
\end{subequations}

  This is a closed semi-bounded quadratic form and the domain $\vD
  (H)$ of the 
corresponding operator $H$ (cf. \cite{Da, RS}) is characterized  as the subset
of   $f$'s in $\vD (Q)$ for which
\begin{equation}
  \label{eq:96}
  h\in L^2(\R_+)\text{ where }h(r):=\parb
  {-\tfrac{\d ^2}{\d r^2}-\tfrac{1}{4r^2}+W(r)}f(r) 
  \text{ as a distribution on } \R_+,
  \end{equation} and for $f\in  \vD(H)$ we have 
  \begin{equation}
    \label{eq:97}
    (Hf)(r)= \parb {-\tfrac{\d ^2}{\d r^2}-\tfrac{1}{4r^2}+W(r)}f(r).
  \end{equation}

To see the connection to the two-dimensional Hamiltonian  of Section \ref{Asymptotics
  of physical phase shift for a potential} defined with  form domain
$H^1(\R^2)$ let us note the alternative description of $Q$:
\begin{subequations}
  \begin{align}
  \label{eq:91}
  \vD (Q)&=\{f\in L^2(\R_+)| \,\tilde g(|\cdot|)\in H^1(\R^2)\text{ where
  }\tilde g(r)=r^{-1/2}f(r)\},\\
  \label{eq:92}
  Q(f)&=(2\pi)^{-1}\int _{\R^2}\parbb{\big |\nabla \parb{|x|^{-1/2}f(|x|)}\big |^2+W(|x|)\big ||x|^{-1/2}f(|x|)\big |^2}\,\d
  x\mfor f\in \vD (Q).
\end{align} 
\end{subequations} Clearly the integral to the right  in  \eqref{eq:92} is the
form of the two-dimensional Hamiltonian (applied to  radially
symmetric functions).

We also note that $H^1_0(\R_+)\subseteq \vD (Q)$ and that  
\begin{equation*}
  \label{eq:93}
  C_c^\infty(\R_+)+{\rm span} \parb{ f_0};\;f_0(r):=r^{1/2}\chi(r<1),
\end{equation*} is  a core for $Q$. In fact, although $f_0\notin
H^1_0(\R_+)$, the set $C_c^\infty(\R_+)$ is  actually a core for  $Q$. Whence
 $H$ is   the Friedrichs extension of the action \eqref{eq:97}
 on $C_c^\infty(\R_+)$.

Due to \eqref{eq:95} and the description in  \eqref{eq:96} of the domain $\vD
  (H)$ we can show the uniqueness of regular
  solutions exactly as in Appendix \ref{Regular zero energy
    solutions}. The existence of  (nonzero) regular solutions follows
  from 
  the previous scheme too. Indeed the basic operators $K^{\pm}$ of
  \eqref{eq:29} are again compact on $\vB(\vH _s)$. To see this we
  need to see that various terms are compact. Let us here consider the  contribution from the first
  term of \eqref{eq:28} 
  \begin{equation*}
  -\parb{ \chi_1''+2\chi_1'\tfrac{\d}{\d r}}\big (H\mp\i\big )^{-1}\chi_1 +\chi_1(\pm\i)\big (H\mp\i\big )^{-1}\chi_1=: K^{\pm}_1+ K^{\pm}_2.
  \end{equation*} (The contribution from the second term of
  \eqref{eq:28} is treated in the same way as before.) We decompose using any $C>0$ such that $H^0\geq C+1$
  \begin{align*}
 K^{\pm}_1&=B^{\pm}K;\\
B^{\pm} &=-\parbb{ \chi_1''(r)+2\chi_1'(r)\tfrac{1}{2r}+2\chi_1'(r)\parb
  {\tfrac{\d}{\d r}-\tfrac{1}{2r}}}(H\mp\i\big )^{-1}(H-C\big
)^{1/2},\\
K&=(H-C\big
)^{-1/2}\chi_1.
      \end{align*} The operator $B^{\pm}$ is bounded and the operator
      $K$ is compact (the latter may be seen easily by going back to the
      space $L^2(\R^2)$ and there invoking standard Sobolev embedding); whence $ K^{\pm}_1$ is compact. Clearly also $ K^{\pm}_2$ is compact. 

\bigskip

\end{document}